\newcommand{\R}{\mathbb R}
\newcommand{\N}{\mathbb N}
\newcommand{\Hyp}{\mathbb H}
\newcommand{\Sp}{\mathbb S}
\newtheorem{theorem}{Theorem}
\newtheorem{corollary}{Corollary}
\newtheorem{example}{Example}
\newtheorem{lemma}{Lemma}
\newtheorem{proposition}{Proposition}
\newtheorem{remark}{Remark}
\begin{document}

\title[Eigenvalues of Schrödinger operators on surfaces]{Isoperimetric inequalities for the eigenvalues of natural Schrödinger operators on surfaces}

\author{ Ahmad El Soufi  }

\address{ Laboratoire de Math\'ematiques et Physique Th\'eorique,
UMR CNRS 6083, Universit\'e François Rabelais de Tours, Parc de Grandmont, F-37200
Tours France}
\email{elsoufi@univ-tours.fr}

\keywords{Laplacian, Schrödinger operator, eigenvalues, isoperimetric inequalities for eigenvalues.}

\subjclass[2000]{35P15, 58J50, 35J10}

\begin{abstract}

This paper deals with eigenvalue optimization problems for a family of natural Schrödinger operators arising in some geometrical or physical contexts. These operators, whose potentials are quadratic in curvature, are considered on closed surfaces immersed in space forms and we look for geometries that maximize the eigenvalues. We show that under suitable assumptions on the potential, the first and the second eigenvalues are maximized by (round) spheres.   
\end{abstract} 

\maketitle

\section {Introduction and statement of main results}\label{1}

There has been remarkable interest in recent years in the eigenvalues of Schrödinger operators with quadratic curvature potentials on surfaces.  Operators of this type appear in several contexts where the physics is strongly influenced by the geometry and the topology of the surface. As examples we can cite stability properties of interfaces in reaction-diffusion systems such as Allen-Cahn, see \cite{AFS, Ha, HL}, and the quantum mechanics on
 nanoscale structures, see \cite{daC, DE, EHL}. In this paper we discuss a natural isoperimetric problem for the eigenvalues of such operators on compact immersed surfaces in the $n$-dimensional Euclidean space $\R^n$ or, more generally, in a simply connected space form.  

Let $X:M\to \R^n$ be a regular compact immersed surface of $\R^n$, which means that $M$ is a compact 2-dimensional differentiable manifold and $X$ is a regular (of class $C^2$) immersion. The corresponding Laplace-Beltrami operator  that we denote $-\Delta_X$, admits a positive unbounded sequence of eigenvalues 
  $$0=\lambda_1(-\Delta_{X})  <  \lambda_2(-\Delta_{X}) \le \lambda_3(-\Delta_{X})\le 
 \cdots \le \lambda_i(-\Delta_{X})\le \cdots$$
  
 These eigenvalues have been intensively studied during the last decades. Important results have been proved regarding the existence of a uniform upper bound for $\lambda_i(-\Delta_{X})$ among surfaces of specified topology and area, the behavior of the optimal upper bound in terms of the genus of the surface and the order $i$ of the eigenvalue (see \cite{YY, LY, EI, EI0} for $i=2$ and \cite{K, CE} for $i$ arbitrary),  
 and the existence and the determination of possible maximizing surfaces for the lowest eigenvalues 
(see \cite{H, N2} for surfaces of genus zero, \cite{N1} for tori, \cite{LY} for the case where $M$ is a projective plane, \cite{JNP} and \cite{EGJ}  for the case where $M$ is a Klein bottle and \cite{JLNNP} for orientable surfaces of genus 2)

 
 Let us denote by $\Sp^2$ the standard sphere of radius $1$ naturally embedded in $\R^3$. Hersch \cite{H} proved that the maximum of the first positive eigenvalue $\lambda_2(-\Delta_{X})$ among immersed orientable surfaces $X:M\to \R^n$ of \emph{genus zero} and area $|X(M)|=|\Sp^2|=4\pi$,  is uniquely achieved by $\Sp^2$. 
This isoperimetric property fails  as soon as the genus of $M$ is not zero. Indeed, for any  $M$ such that genus$(M)\ge1$, there exist immersions $X:M\to \R^n$ with $|X(M)|=4\pi$ and $\lambda_2(-\Delta_{X})>\lambda_2(-\Delta_{\Sp^2})$. In fact, according to \cite{CE}, the supremum of $\lambda_k(-\Delta_{X})$ over the set of all $X:M\to R^n$ such that  $|X(M)|=4\pi$, is an nondecreasing function of the genus of $M$ which tends to infinity with a linear growth rate. Therefore, the second eigenvalue  $\lambda_2(-\Delta_{X})$ is not uniformly bounded over the set of all surfaces $X:M\to \R^n$ of fixed area. 
 
 Surprisingly, the situation changes completely as soon as the Laplace-Beltrami operator is penalized by the extrinsic curvature of the surface.
 Indeed, Ilias and the author \cite{EI4} proved that the standard sphere $\Sp^2$ maximizes the first two eigenvalues of the operator $-\Delta_X -2|H_X|^2$ among all compact surfaces $X:M\to\R^n$ of area $4\pi$, where  $|H_X|$ is the length of the mean curvature vector field. The same result is also true for the operator  $-\Delta_X -|h_X|^2$, where $|h_X|$ is the length of the second fundamental form of $X$. A codimension 1 version of this last result was first obtained by Harrell and Loss \cite{HL}. This phenomenon relies on the fact that, whilst the eigenvalues of $-\Delta_X$ are of intrinsic nature, in the sense that they only depend on the Riemannian metric induced on $M$ by $X$ (notice that, since the dimension of the ambient $\R^n$ is not restricted, any Riemannian metric on $M$ can be induced by such an immersion thanks to Nash-Moser Theorem), the spectrum of $-\Delta_X -|h_X|^2$ depends also on how the surface $X(M)$ is bended in  $\R^n$. The results of \cite{EI4} express the fact that in order to induce large eigenvalues of $-\Delta_X $, the surface  $X:M\to R^n$ must be more and more ``bended''.  
 
Now, as observed above, any compact surface $X:M\to \R^n$ with genus$(M)=0$ and $|X(M)|=4\pi$, satisfies both 
$\lambda_2(-\Delta_X)\le \lambda_2(-\Delta_{\Sp^2})$ and  
$\lambda_2(-\Delta_X+|h_X|^2)\le \lambda_2(-\Delta_{\Sp^2}+|h_{\Sp^2}|^2)$. This motivated Harrell and Loss  \cite{HL} to ask if one can interpolate 
to get $\lambda_2(-\Delta_X-\alpha|h_X|^2)\le \lambda_2(-\Delta_{\Sp^2}-\alpha|h_{\Sp^2}|^2)$ for all $\alpha\ge0$.
 In the present article we give a positive answer to this question. Actually, we put this problem into the following more general setting : For all  $\alpha\in\R$ and $\beta\in\R$, we consider the operator  
 $$L_{X,\alpha,\beta}= - \Delta_X -(\alpha |h_X|^2+\beta |H_X|^2).$$
In the particular case of surfaces immersed in $\R^3$ one has
$$L_{X,\alpha,\beta}= - \Delta_X -\alpha (\kappa_1^2+\kappa_2^2)-\frac 1 4\beta (\kappa_1+\kappa_2)^2,$$
where $\kappa_1$ and $\kappa_2$ are the principal curvatures of the surface. In fact, 
any quadratic symmetric polynomial of the principal curvatures can be written as $\alpha |h_X|^2+\beta |H_X|^2$ for a suitable choice of $\alpha$ and $\beta$. 

Recall that any compact totally umbilic surface $X:M\to \R^n$ is a Euclidean 2-sphere of a 3-dimensional linear subspace of $\R^n$. Such a surface will be called ``round sphere''. For a round sphere of area $4\pi$ one has $|h_{\Sp^2}|^2=2|H_{\Sp^2}|^2=2$ and the lowest two eigenvalues of $L_{X,\alpha,\beta}$ are $\lambda_1(L_{\Sp^2,\alpha,\beta})= - 2\alpha -\beta$ and $\lambda_2(L_{\Sp^2,\alpha,\beta})= 2 - 2\alpha -\beta$.
 
Let us start with the following result concerning the first eigenvalue.
 \begin{proposition}\label{lam1} Let $\alpha\in\R$ and $\beta\in\R$ be such that $4\alpha+\beta\ge 0$. For any compact immersed surface $X:M\to \R^n$ such that $|X(M)|=4\pi$, one has 
 \begin{equation}\label{l1}
 \lambda_1(L_{X,\alpha,\beta})\le \lambda_1(L_{\Sp^2,\alpha,\beta})-\varepsilon\alpha\ \text{genus} (M),
  \end{equation}
 where $\varepsilon = 2$ if $M$ is orientable and $\varepsilon=1$ otherwise.\\
 If $4\alpha+\beta>0$, then the equality holds in (\ref{l1}) if and only if $X(M)$ is a round sphere.\\ 
 If $4\alpha+\beta=0$ and $\alpha\neq 0$, then the equality holds in (\ref{l1}) if and only if the surface $M$ endowed with the Riemannian metric induced by $X$ has constant sectional curvature. 
 \end{proposition}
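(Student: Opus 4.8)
The plan is to combine the variational characterisation of the bottom of the spectrum with the Gauss equation, the Gauss--Bonnet formula, and the Willmore inequality. Set $V=\alpha|h_X|^2+\beta|H_X|^2$, so that $L_{X,\alpha,\beta}=-\Delta_X-V$. Applying the min--max principle to the constant test function $f\equiv1$ and using $\int_M\,dv=|X(M)|=4\pi$ gives
\begin{equation*}
\lambda_1(L_{X,\alpha,\beta})\le\frac{\int_M\bigl(|\nabla f|^2-Vf^2\bigr)\,dv}{\int_M f^2\,dv}=-\frac1{4\pi}\int_M V\,dv,
\end{equation*}
so it suffices to bound $\int_M V\,dv$ from below.

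The next step is to rewrite $V$ intrinsically. For a surface in a flat ambient space the Gauss equation reads $|h_X|^2=4|H_X|^2-2K$ in any codimension, where $K$ is the Gauss curvature of the metric induced by $X$; hence $V=(4\alpha+\beta)|H_X|^2-2\alpha K$. Integrating, using Gauss--Bonnet $\int_M K\,dv=2\pi\chi(M)$ together with $\chi(M)=2-\varepsilon\,\text{genus}(M)$ (with the convention that the genus of a non-orientable surface is its crosscap number, so that $\varepsilon=1$), and then invoking the Willmore inequality $\int_M|H_X|^2\,dv\ge4\pi$ — whose coefficient $4\alpha+\beta$ has the favourable sign by hypothesis — one gets
\begin{equation*}
\int_M V\,dv=(4\alpha+\beta)\int_M|H_X|^2\,dv-4\pi\alpha\bigl(2-\varepsilon\,\text{genus}(M)\bigr)\ \ge\ 4\pi(2\alpha+\beta)+4\pi\varepsilon\alpha\,\text{genus}(M).
\end{equation*}
Dividing by $-4\pi$ (which reverses the inequality) and recalling $\lambda_1(L_{\Sp^2,\alpha,\beta})=-2\alpha-\beta$ yields exactly $(\ref{l1})$.

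For the equality discussion — the only genuinely delicate part — I would trace back the two inequalities used. Equality in the min--max step means $f\equiv1$ is a first eigenfunction of $L_{X,\alpha,\beta}$, i.e. $-V=L_{X,\alpha,\beta}(1)$ is constant. When $4\alpha+\beta>0$, equality in the Willmore step additionally forces $\int_M|H_X|^2\,dv=4\pi$, hence $X(M)$ is a round sphere; conversely a round sphere of area $4\pi$ has $|h_X|^2=2$, $|H_X|^2=1$ and $\text{genus}(M)=0$, so it realises equality. When $4\alpha+\beta=0$ and $\alpha\neq0$, however, $V=-2\alpha K$ and $L_{X,\alpha,\beta}=-\Delta_X+2\alpha K$, so the Willmore step is vacuous and equality in $(\ref{l1})$ holds precisely when $f\equiv1$ is a first eigenfunction, that is, when $K$ is constant — i.e. when the induced metric has constant curvature; the converse is immediate since $\lambda_1(-\Delta_X+2\alpha c)=2\alpha c$ for a constant $c$. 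The inequality itself is a short computation; the points requiring care are using the Gauss equation and Willmore's inequality in full (higher-codimension) generality with their sharp rigidity statements, and, above all, correctly separating the two equality regimes, since the rigidity has completely different sources in them — Willmore rigidity when $4\alpha+\beta>0$ versus the test function being an eigenfunction when $4\alpha+\beta=0$ — which is exactly why the conclusions differ.
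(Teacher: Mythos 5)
Your proof is correct and follows essentially the same route as the paper's: the constant test function in the Rayleigh quotient, the Gauss equation $|h_X|^2=4|H_X|^2-2K$ combined with Gauss--Bonnet, the Willmore--Chen inequality $\int_M|H_X|^2\,dv\ge 4\pi$ (valid in arbitrary codimension, with rigidity characterising round spheres), and the same two-regime analysis of equality. No issues to report.
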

 \begin{remark} 
\begin{itemize}
	\item The assumption  $4\alpha+\beta\ge 0$ is crucial in Proposition \ref{lam1}. For instance, when $\alpha=0$ and $\beta=-1$, it is known that $\Sp^2$ becomes a minimizer of $\lambda_1(- \Delta_X +|H_X|^2)$ among compact surfaces of genus zero immersed in $\R^3$ (see \cite {F}) and it is conjectured that $\Sp^2$ should minimize $\lambda_1(- \Delta_X - \beta |H_X|^2)$ for all $\beta\in (-1, 0)$.
	\item The case where $4\alpha+\beta=0$ corresponds to the situation where the operator $L_{X,\alpha,\beta}$ is intrinsic (i.e. depends only on the Riemannian metric induced on $M$ by $X$). Indeed, thanks to the Gauss equation $|h_X|^2=-2K_X+4|H_X|^2$, we have in this case $L_{X,\alpha,-4\alpha}= - \Delta_X +2\alpha K_X$, where $K_X$ is the Gaussian curvature of $M$ with respect to the metric induced by $X$.
	\item Regarding the equality case in (\ref{eq 0}) when $4\alpha+\beta=0$, it is well known that in $\R^3$, round spheres are  the only immersed surfaces of constant sectional curvature. However, there exist examples of isometrically immersed spheres which are not round. For instance, the  map $X(x,y,z)=(\cos x, \sin x, y, z)$ induces an isometric embedding from the standard sphere into $\R^4$ whose image is not  totally umbilic.
\end{itemize}
 \end{remark} 
 Regarding the second eigenvalue, we first discuss the case of surfaces of genus zero (i.e. topologically equivalent to a sphere).
 \begin{theorem}\label{genus=0} Let $M$ be a compact surface of genus zero and let $\alpha\in\R$ and $\beta\in\R$ be such that $4\alpha+\beta\ge 0$. For any immersion $X:M\to \R^n$ such that $|X(M)|=4\pi$, one has   
 \begin{equation}\label{eq 0}
 \lambda_2(L_{X,\alpha,\beta})\le \lambda_2(L_{\Sp^2,\alpha,\beta}).
  \end{equation}
 If $4\alpha+\beta>0$, then the equality holds in (\ref{eq 0}) if and only if $X(M)$ is a round sphere.\\ 
 \end{theorem}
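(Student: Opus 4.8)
The plan is to use the classical center-of-mass / conformal renormalization trick à la Hersch, adapted to the Schrödinger operator setting. Since $M$ has genus zero, the induced metric is conformal to the round metric on $\Sp^2$, so after composing with a suitable conformal diffeomorphism of $\Sp^2$ we may assume that the coordinate functions $Y_1,Y_2,Y_3$ of the Gauss-type map (the position vector of the conformally rescaled sphere, or equivalently an $L^2$-balanced family of first spherical harmonics pulled back to $M$) satisfy $\int_M Y_i \, d\mu_X = 0$ for $i=1,2,3$. This renormalization is possible by the standard topological (Brouwer degree) argument; the genus-zero hypothesis is exactly what makes it work, and this is where orientability and the uniformization theorem enter.

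Next I would use the three functions $Y_1,Y_2,Y_3$ as trial functions in the variational (min-max) characterization of $\lambda_2(L_{X,\alpha,\beta})$. Because they are orthogonal to the constants in $L^2(M,d\mu_X)$ and span a 3-dimensional space, for at least one normalized combination $Y=\sum c_i Y_i$ we get
\begin{equation*}
\lambda_2(L_{X,\alpha,\beta}) \le \frac{\int_M \left( |\nabla Y|^2 - (\alpha|h_X|^2+\beta|H_X|^2)\,Y^2 \right) d\mu_X}{\int_M Y^2 \, d\mu_X}.
\end{equation*}
The key algebraic input is the pointwise identity $\sum_i |\nabla Y_i|^2 = 2|H_X|^2 \sum_i Y_i^2 + (\text{terms controlled by }|h_X|^2)$ coming from the structure of the second fundamental form; more precisely, summing the Rayleigh quotients over $i=1,2,3$ and using $\sum_i Y_i^2 = 1$ together with the Gauss equation $|h_X|^2 = -2K_X + 4|H_X|^2$, the curvature-potential terms should combine so that the numerator collapses to $\int_M \left( 2|H_X|^2 - (\alpha|h_X|^2+\beta|H_X|^2) \right) d\mu_X$ plus a nonpositive correction whenever $4\alpha+\beta\ge 0$. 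Averaging, one obtains
\begin{equation*}
\lambda_2(L_{X,\alpha,\beta}) \le \frac{1}{3}\int_M \left( 2|H_X|^2 - (\alpha|h_X|^2+\beta|H_X|^2) \right) d\mu_X \cdot \frac{3}{\int_M 1\, d\mu_X},
\end{equation*}
and then the Willmore inequality $\int_M |H_X|^2 d\mu_X \ge 4\pi$ (with equality iff round sphere) together with $\int_M |h_X|^2 d\mu_X \ge \int_M 2|H_X|^2 d\mu_X \ge 8\pi$ lets one bound the right-hand side by $2 - 2\alpha - \beta = \lambda_2(L_{\Sp^2,\alpha,\beta})$, using $|X(M)|=4\pi$ and the sign condition $4\alpha+\beta\ge 0$ to ensure the curvature integrals push in the right direction.

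For the equality case when $4\alpha+\beta>0$: tracing back through the inequalities, equality forces $\int_M|H_X|^2 d\mu_X = 4\pi$, which by the equality case of the Willmore inequality forces $X(M)$ to be a round sphere; one should also check that the conformal-balancing step is consistent (i.e. the conformal factor is constant), which again pins down the round metric.

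\textbf{Main obstacle.} I expect the delicate point to be the pointwise curvature identity and the bookkeeping that shows the "extra" terms in $\sum_i|\nabla Y_i|^2$ beyond $2|H_X|^2$ have a definite sign after integration precisely under $4\alpha+\beta\ge 0$ — in other words, correctly identifying which combination of $|h_X|^2$, $|H_X|^2$ and $K_X$ appears and invoking Gauss–Bonnet ($\int_M K_X\,d\mu_X = 4\pi$ in genus zero) to absorb it. The sign hypothesis $4\alpha+\beta\ge0$ must be used exactly once, at the step where an integrated curvature term of the form $(4\alpha+\beta)\int_M(\text{something}\ge 0)$ is discarded, and getting that accounting right — rather than the soft topological renormalization or the final Willmore estimate — is the crux.
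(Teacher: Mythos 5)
Your overall architecture --- uniformize, Hersch-balance a conformal map $\phi:M\to\Sp^2$, test $\lambda_2$ on its coordinates, then invoke the Gauss equation, Gauss--Bonnet and Willmore --- is exactly the mechanism behind the paper's proof, which packages the balancing step into the Li--Yau conformal area and the bound $\lambda_2(-\Delta_X+q)\,|X(M)|\le 2A_c(M,[g_X])+\int_M q$, with $A_c=4\pi$ in genus zero. However, your key quantitative step is wrong, and the error is fatal on part of the stated range. For a balanced conformal diffeomorphism $\phi=(Y_1,Y_2,Y_3):M\to\Sp^2$ one has $\sum_i Y_i^2=1$ and $\sum_i|\nabla Y_i|^2=|d\phi|^2$, whose integral is twice the area of the image, i.e.\ the \emph{constant} $8\pi$; it has no pointwise relation to $|H_X|^2$ or $|h_X|^2$ (identities of the type $\sum_i|\nabla Y_i|^2=2|H_X|^2+\cdots$ pertain to the position vector of an immersion or to the Gauss map, neither of which is the map you balanced). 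If, as in your displayed bound, the energy term is replaced by $2\int_M|H_X|^2$, then setting $W=\int_M|H_X|^2$ and using $\int_M|h_X|^2=4W-8\pi$ (Gauss equation plus Gauss--Bonnet in genus zero), your right-hand side equals $(2-4\alpha-\beta)\frac{W}{4\pi}+2\alpha$. Since $W\ge 4\pi$ can be arbitrarily large over genus-zero immersions, this is $\le 2-2\alpha-\beta$ only when $4\alpha+\beta\ge 2$; for $0\le 4\alpha+\beta<2$ (including $\alpha=\beta=0$, i.e.\ Hersch's theorem itself) the claimed bound is false, and applying Willmore to a term with positive coefficient pushes the inequality the wrong way. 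The correct accounting keeps the energy term equal to $8\pi$ and uses $4\alpha+\beta\ge 0$ together with $W\ge 4\pi$ only on the potential term:
\begin{equation*}
4\pi\,\lambda_2(L_{X,\alpha,\beta})\le 8\pi-\alpha\int_M|h_X|^2-\beta\int_M|H_X|^2=8\pi+8\pi\alpha-(4\alpha+\beta)W\le 8\pi-8\pi\alpha-4\pi\beta,
\end{equation*}
which is the theorem; equality with $4\alpha+\beta>0$ then forces $W=4\pi$, hence a round sphere by the Willmore--Chen equality case.

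A second, fixable gap: for a Schr\"odinger operator the first eigenfunction $u_1$ is not constant, so arranging $\int_M Y_i\,v_X=0$ does not make the $Y_i$ admissible test functions for $\lambda_2$, nor does it allow you to sum the three Rayleigh quotients (your ``one normalized combination'' fallback loses the summation and does not yield the bound). You must balance against the measure $u_1\,v_X$, i.e.\ arrange $\int_M Y_i\,u_1\,v_X=0$ for all $i$, which the Brouwer-degree argument still provides; this is precisely what is done in the lemma of El Soufi--Ilias that the paper cites.
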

  
 The first part of this theorem extends the result of Harrell \cite{Ha} concerning the special case where $n=3$, $\alpha\ge 0$ and $\beta\ge -2\alpha$. Indeed, these conditions on $\alpha$ and $\beta$ are equivalent to the non-negativity of $\alpha |h_X|^2+\beta |H_X|^2=\alpha (\kappa_1^2+\kappa_2^2)+\frac 1 4\beta (\kappa_1+\kappa_2)^2$ as a quadratic form in the principal curvatures $\kappa_1$ and $\kappa_2$.

 For surfaces of higher genus, we obtain the following
 
 \begin{theorem}\label{genus=arb} 
    Let $M$ be a compact orientable surface of genus $\gamma\ge 1$ and let $\alpha\in\R$ and $\beta\in\R$ be such that $\alpha\ge 0$ and $4\alpha+\beta\ge 0$. If either 
   \begin{enumerate}
	\item [i)] $\gamma$ is even and $8\alpha +\beta > 2$, or
	\item [ii)]  $\gamma$  is odd and $4\frac {2\gamma +1}{\gamma +1}\alpha+\beta> 2 $,
\end{enumerate}
then, for any immersion $X:M\to R^n$ such that $|X(M)|=4\pi$ one has
 $$\lambda_2(L_{X,\alpha,\beta})< \lambda_2(L_{\Sp^2,\alpha,\beta}).$$
 \end{theorem}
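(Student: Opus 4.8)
The plan is to adapt the Hersch--Li--Yau--Yang--Yau test-function method for the first nonzero eigenvalue to the Schr\"odinger operator $L_{X,\alpha,\beta}$, carrying the potential $\alpha|h_X|^2+\beta|H_X|^2$ through the computation and closing the argument with the Gauss equation, Gauss--Bonnet, and the Willmore inequality.

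First I would pass to the conformal structure induced on $M$ by $X$ and use the classical gonality bound (Meis / Brill--Noether): there is a nonconstant holomorphic map $\psi=(\psi_1,\psi_2,\psi_3)\colon M\to\Sp^2\subset\R^3$ of degree $d\le\lfloor(\gamma+3)/2\rfloor$. Since the ground state $u_1$ of $L_{X,\alpha,\beta}$ is positive, $u_1\,dv$ is a positive measure, and Hersch's lemma produces a conformal automorphism of $\Sp^2$ after composition with which $\int_M\psi_i\,u_1\,dv=0$ for $i=1,2,3$; such a composition is again holomorphic of degree $d$, hence keeps $\sum_i\psi_i^2\equiv1$ and keeps the energy $\int_M|\nabla\psi|^2\,dv=8\pi d$ (for a holomorphic map this energy equals twice the area of the image counted with multiplicity).

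Next, each $\psi_i$ is $L^2$-orthogonal to $u_1$ and hence admissible in the variational characterization of $\lambda_2$: $\lambda_2(L_{X,\alpha,\beta})\int_M\psi_i^2\,dv\le\int_M|\nabla\psi_i|^2\,dv-\int_M(\alpha|h_X|^2+\beta|H_X|^2)\psi_i^2\,dv$. Summing over $i$ and using $\sum_i\psi_i^2\equiv1$ and $|X(M)|=4\pi$ gives $4\pi\,\lambda_2(L_{X,\alpha,\beta})\le 8\pi d-\alpha\int_M|h_X|^2\,dv-\beta\int_M|H_X|^2\,dv$. Inserting the Gauss equation $|h_X|^2=4|H_X|^2-2K_X$, Gauss--Bonnet $\int_M K_X\,dv=4\pi(1-\gamma)$, the Willmore inequality $\int_M|H_X|^2\,dv\ge4\pi$, and the hypothesis $4\alpha+\beta\ge0$, the right-hand side collapses and one obtains
\[
\lambda_2(L_{X,\alpha,\beta})\le\lambda_2(L_{\Sp^2,\alpha,\beta})+2\,(d-1-\alpha\gamma).
\]
With $d\le\lfloor(\gamma+3)/2\rfloor$ this correction term is $\le\gamma(\tfrac12-\alpha)$ for $\gamma$ even and $\le\tfrac{\gamma+1}{2}-\alpha\gamma$ for $\gamma$ odd; wherever the parity hypotheses make it negative the conclusion follows, while the borderline case in which it equals $0$ is handled by the strict Willmore inequality $\int_M|H_X|^2\,dv>4\pi$ (valid since $\gamma\ge1$ forces $X(M)$ not to be a round sphere) together with $4\alpha+\beta>0$.

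The hard part will be the regime where the correction term $2(d-1-\alpha\gamma)$ above is positive --- i.e. $\alpha<\tfrac12$ (resp. $\alpha<\tfrac{\gamma+1}{2\gamma}$), which by the hypotheses (i)--(ii) forces $\beta$ to be large. There the crude Willmore bound $\int_M|H_X|^2\,dv\ge4\pi$ is too weak and must be replaced by a sharper lower bound for the total curvature energy $\alpha\int_M|h_X|^2\,dv+\beta\int_M|H_X|^2\,dv$ that reflects quantitatively the fact that surfaces of positive genus are genuinely more bent --- for instance through Li--Yau-type multiplicity estimates, or through a splitting of $L_{X,\alpha,\beta}$ reducing matters to the inequality $\lambda_2(-\Delta_X-|h_X|^2)\le0$ of \cite{EI4}, which holds for every immersion regardless of genus. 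I expect this refinement of the curvature estimate, not the eigenvalue machinery, to be the crux.
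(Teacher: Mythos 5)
Your test-function computation is correct as far as it goes (up to a factor of $2$: with $d\le\lfloor\frac{\gamma+3}{2}\rfloor$ the correction $2(d-1-\alpha\gamma)$ is bounded by $\gamma(1-2\alpha)$ for $\gamma$ even and by $(\gamma+1)-2\alpha\gamma$ for $\gamma$ odd, not by half of these; the signs are unaffected). But the argument only closes when $\alpha>\frac12$ (resp.\ $\alpha>\frac{\gamma+1}{2\gamma}$), whereas hypotheses (i)--(ii) permit $\alpha$ arbitrarily small --- e.g.\ $\alpha=0$, $\beta>2$ --- and you explicitly leave that regime open. So the proposal has a genuine gap, and moreover the fix you speculate about (a sharper genus-dependent lower bound on the curvature energy alone, of the order $\int_M|H_X|^2\ge 4\pi+2\pi\gamma$ when $\alpha=0,\ \beta$ close to $2$) is essentially a generalized Willmore conjecture: it is neither available nor what the paper uses.

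The missing idea is structural rather than a ``refinement of the curvature estimate'': by Li--Yau (the paper's Lemma \ref{li-yau}), the Willmore energy is bounded below not merely by $4\pi$ but by the \emph{conformal area} $A_c(M,[g_X])$ itself, i.e.\ $\int_M|H_X|^2\,v_X\ge A_c(M,[g_X])$, and this is the very same quantity that controls $\lambda_2$ from above via $\lambda_2(-\Delta_X+q)\,|X(M)|\le 2A_c(M,[g_X])+\int_M q$ (Lemma \ref{conformalvol}, which is exactly your Hersch/Yang--Yau argument run over all conformal maps rather than one fixed gonal map of degree $d$). Substituting the Gauss equation and then using $4\alpha+\beta\ge0$ to replace $\int_M|H_X|^2$ by $A_c(M,[g_X])$, the conformal area survives with net coefficient $2-(4\alpha+\beta)$, and one concludes by a sign dichotomy: if $4\alpha+\beta\ge2$ use the lower bound $A_c\ge4\pi$, and if $0\le4\alpha+\beta<2$ use the upper bound $A_c\le4\pi\lfloor\frac{\gamma+3}{2}\rfloor$ (which is where your gonality bound actually enters). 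It is precisely this dichotomy that produces the thresholds $8\alpha+\beta>2$ and $4\frac{2\gamma+1}{\gamma+1}\alpha+\beta>2$ in the statement. Your framework cannot exploit this because you commit from the start to the fixed number $8\pi d$ (with $d$ the gonality) in the eigenvalue bound, which decouples it from the quantity $A_c(M,[g_X])$ that bounds $\int_M|H_X|^2$ from below; reorganizing the argument around the conformal area, as in Proposition \ref{prop1}, closes the gap.
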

 A similar result can be obtained for non-orientable surfaces (see Proposition \ref{prop2}).

Let us come back to the one-parameter family of operators 
$$L_{X,\alpha}:=L_{X,\alpha,0}= - \Delta_X -\alpha |h_X|^2.$$ 
 As observed above, for any non-spherical surface $X:M\to R^n$ of area $4\pi$, the function $\alpha\mapsto\lambda_2(L_{\Sp^2,\alpha})-\lambda_2(L_{X,\alpha})$ is always positive for $\alpha=1$ and often negative for $\alpha=0$. A natural question is to determine the bifurcation value for $\alpha$ that we define as follows:
 $$\alpha_X:=\inf\{\alpha_0\ge 0 \, ; \, \lambda_2(L_{X,\alpha})\le \lambda_2(L_{\Sp^2,\alpha}),\, \forall\, \alpha\ge \alpha_0\},$$
or, in order to relax the condition  $|X(M)|=4\pi$, 
$$\alpha_X:=\inf\{\alpha_0\ge 0 \, ; \, \lambda_2(L_{X,\alpha})|X(M)|\le 4\pi \lambda_2(L_{\Sp^2,\alpha}),\, \forall\, \alpha \ge \alpha_0\},$$
An immediate consequence of Theorem \ref{genus=0} and Theorem \ref{genus=arb} above is the following
 \begin{corollary}\label{cor}Let $X:M\to R^n$ be a compact immersed orientable surface. 
\begin{enumerate}
\item [i)] If the genus of $M$  is zero, then $\alpha_X=0$
	\item [ii)] If the genus of $M$  is even, then $\alpha_X\le \frac 1 4$ 
	\item [iii)] If the genus $\gamma$ of $M$ is odd, then $\alpha_X\le \frac {\gamma+1} {4\gamma +2}$.
\end{enumerate}
\end{corollary}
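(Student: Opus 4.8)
The plan is to derive all three assertions directly from Theorems~\ref{genus=0} and~\ref{genus=arb}, specialized to $\beta=0$, after a preliminary reduction to the normalized case $|X(M)|=4\pi$ via a homothety of $\R^n$.

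First I would record the scale invariance of the functional $X\mapsto\lambda_2(L_{X,\alpha})\,|X(M)|$. If $c>0$ and $\tilde X=cX$, then the metric induced by $\tilde X$ equals $c^2$ times that induced by $X$, whence $\Delta_{\tilde X}=c^{-2}\Delta_X$, $|h_{\tilde X}|^2=c^{-2}|h_X|^2$, so $L_{\tilde X,\alpha}=c^{-2}L_{X,\alpha}$ and $\lambda_2(L_{\tilde X,\alpha})=c^{-2}\lambda_2(L_{X,\alpha})$, while $|\tilde X(M)|=c^2|X(M)|$; thus $\lambda_2(L_{\tilde X,\alpha})\,|\tilde X(M)|=\lambda_2(L_{X,\alpha})\,|X(M)|$. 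Choosing $c$ with $|\tilde X(M)|=4\pi$, the defining inequality $\lambda_2(L_{X,\alpha})\,|X(M)|\le 4\pi\,\lambda_2(L_{\Sp^2,\alpha})$ becomes $\lambda_2(L_{\tilde X,\alpha})\le\lambda_2(L_{\Sp^2,\alpha})$, with $\tilde X$ an immersion of the same surface of area $4\pi$. Hence I may henceforth assume $|X(M)|=4\pi$ and work with the inequality $\lambda_2(L_{X,\alpha})\le\lambda_2(L_{\Sp^2,\alpha})$.

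For (i), if $\text{genus}(M)=0$ then, for every $\alpha\ge0$, the pair $(\alpha,0)$ satisfies $4\alpha+\beta\ge0$, so Theorem~\ref{genus=0} gives $\lambda_2(L_{X,\alpha,0})\le\lambda_2(L_{\Sp^2,\alpha,0})$; thus the defining inequality holds for all $\alpha\ge0$, so $0$ belongs to the set whose infimum is $\alpha_X$, and $\alpha_X=0$. For (ii) and (iii), let $M$ be orientable of genus $\gamma\ge1$ and again set $\beta=0$, so the hypotheses $\alpha\ge0$ and $4\alpha+\beta\ge0$ of Theorem~\ref{genus=arb} hold for all $\alpha\ge0$. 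If $\gamma$ is even, the condition $8\alpha+\beta>2$ reads $\alpha>\tfrac14$; if $\gamma$ is odd, the condition $4\,\tfrac{2\gamma+1}{\gamma+1}\,\alpha+\beta>2$ reads $\alpha>\tfrac{\gamma+1}{4\gamma+2}$. Write $\alpha^{\ast}$ for this threshold. Theorem~\ref{genus=arb} then gives $\lambda_2(L_{X,\alpha})<\lambda_2(L_{\Sp^2,\alpha})$ for every $\alpha>\alpha^{\ast}$, so each $\alpha_0>\alpha^{\ast}$ belongs to the set defining $\alpha_X$; taking the infimum over such $\alpha_0$ yields $\alpha_X\le\alpha^{\ast}$, which is exactly (ii) and (iii).

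I expect no real obstacle here. The only two points meriting attention are the homothety reduction above --- needed because $\alpha_X$ is defined without normalizing the area, whereas Theorems~\ref{genus=0} and~\ref{genus=arb} presuppose area $4\pi$ --- and the fact that Theorem~\ref{genus=arb} furnishes strict inequalities only for $\alpha$ \emph{strictly} above $\alpha^{\ast}$; this still produces the bound $\alpha_X\le\alpha^{\ast}$ precisely because $\alpha_X$ is an infimum rather than a minimum.
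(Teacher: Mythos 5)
Your proof is correct and follows exactly the route the paper intends: the corollary is stated there as an immediate consequence of Theorems \ref{genus=0} and \ref{genus=arb} with $\beta=0$, and your two housekeeping points (the homothety reduction justifying the area-free definition of $\alpha_X$, and passing from the strict threshold $\alpha>\alpha^{\ast}$ to the bound $\alpha_X\le\alpha^{\ast}$ via the infimum) are precisely the details the paper leaves implicit.
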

 Assertion (i) of this corollary answers the question of Harrell and Loss \cite{HL}.
\begin{example} For surfaces of genus 1 (immersed tori), Corollary \ref{cor} gives: $\alpha_X\le \frac 1 3$. Let us compute the exact value of $\alpha_X$ for two particular immersed tori : 
\begin{itemize}
	\item [-]
the Clifford torus $\mathbb{T}:=\Sp^1(\frac 1 {\sqrt 2})\times \Sp^1(\frac 1 {\sqrt 2})$ naturally embedded in $\R^4$, for which $|\mathbb{T}|=2\pi^2$, $|h_\mathbb{T}|^2=4|H_\mathbb{T}|^2=4$ and $\lambda_2(-\Delta_\mathbb{T})= 2$ (indeed, $\mathbb{T}$ is a flat minimal surface in $\Sp^3$). Thus, 
$\lambda_2(L_{\mathbb{T},\alpha})|\mathbb{T}|=4\pi^2(1-2\alpha)$. Comparing with  $\lambda_2(L_{\Sp^2,\alpha}) \ |\Sp^2|=8\pi (1-\alpha)$,  we get
$$\alpha_\mathbb{T} = \frac{\pi-2}{2(\pi-1)}\approx 0.2665,$$
	\item [-] the equilateral torus  $M={\mathbb R}^2 / {\mathbb Z}(1,0) \oplus {\mathbb Z}({\frac 1
2},{\sqrt{3}\over 2})$ embedded in $\Sp^5\subset \R^6$ by
$$X(x, y)= \frac{1}{\sqrt 3} (e^ {4i\pi y/\sqrt 3}, e^{2i\pi(x-y/\sqrt 3)}, e^{ 2i\pi (x + y/\sqrt 3)}).$$ 
Again, $X(M)$ is a flat minimal surface in $\Sp^5$ with $|X(M)| =\frac {4\pi^2}{\sqrt 3}$, $|h_X|^2=4|H_X|^2=4$ and $\lambda_2(-\Delta_X)= 2$. This yields
$\lambda_2(L_{X,\alpha})\ |X(M)| =\frac {8\pi^2}{\sqrt 3}(1-2\alpha)$ and, then,
$$\alpha_X= \frac{\pi-\sqrt 3}{2\pi-\sqrt 3}\approx 0.3097 .$$
\end{itemize}
\end{example}

All the results above can be transposed in a more general setting. Indeed, for any $c\in\R$, let $N^n(c)$ be the simply connected space form of curvature $c$ and dimension $n$ (that is $N^n(c)$ is isometric to $\R^n$ for $c=0$, the standard sphere $\Sp^n$ for $c=1$ and the hyperbolic space $\Hyp^n$ for $c=-1$). 
For any immersed surface $X:M\to N^n(c)$, we still denote by $\Delta_X$ the corresponding Laplace-Beltrami operator and by $h_X$ and $H_X$ the second fundamental form and the mean curvature vector, respectively. As above, we consider the operator
 $$L_{X,\alpha,\beta}= - \Delta_X -(\alpha |h_X|^2+\beta |H_X|^2).$$
Among the immersed surfaces $X:M\to N^n(c)$ of $N^n(c)$, the geodesic spheres of $N^3(c)$ play a particular role. These spheres are totally umbilic and any compact totally umbilic surface of $N^n(c)$ is a geodesic sphere of $N^3(c)\hookrightarrow\N^n(c)$.

\begin{theorem}\label{N(c)} 
    Let $X:M\to N^n(c)$ be a compact surface immersed in $N^n(c)$ whose genus and area are denoted by $\gamma$ and $a$, respectively. Let $\alpha\in\R$ and $\beta\in\R$ be such that $4\alpha+\beta\ge 0$.\\ 
 (I)  One has
   \begin{equation}\label{l1c}
 \lambda_1(L_{X,\alpha,\beta})\le \lambda_1(L_{S(a),\alpha,\beta})-\frac{4\varepsilon\pi}{a}\alpha \gamma,
  \end{equation}
  where $S(a)$ is a geodesic sphere of area $a$ in $N^3(c)$, $\varepsilon = 2$ if $M$ is orientable and $\varepsilon=1$ otherwise. Moreover, if $4\alpha+\beta>0$, then the equality holds in (\ref{l1c}) if and only if $X(M)$ is a totally umbilic 2-sphere. If $4\alpha+\beta=0$ and $\alpha\neq 0$, then the equality holds in (\ref{l1c}) if and only if $M$ endowed with the metric induced by $X$ has constant sectional curvature. 
  
\noindent (II) Assume furthermore that $M$ is orientable and that one of the following holds
   \begin{enumerate}
   \item [i)] $\gamma=0$, 
	\item [ii)] $\gamma$ is even, $\alpha\ge 0$ and $8\alpha +\beta \ge 2$, 
	\item [iii)]  $\gamma$  is odd, $\alpha\ge 0$ and $4\frac {2\gamma +1}{\gamma +1}\alpha+\beta\ge 2 $,
\end{enumerate}
then 
\begin{equation}\label{l2c}
\lambda_2(L_{X,\alpha,\beta})\le \lambda_2(L_{S(a),\alpha,\beta}).
\end{equation}
Moreover, if $\gamma=0$ and $4\alpha+\beta>0$, then the equality holds in (\ref{l2c}) if and only if $X(M)$ is a totally umbilic 2-sphere.  
 \end{theorem}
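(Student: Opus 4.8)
For part (I), I would follow the argument behind Proposition \ref{lam1}, substituting for the Euclidean Gauss--Bonnet formula and the Willmore inequality their space-form versions. Testing the Rayleigh quotient of $L_{X,\alpha,\beta}$ against the constant function $1$ gives $\lambda_1(L_{X,\alpha,\beta})\le-\frac1a\int_M(\alpha|h_X|^2+\beta|H_X|^2)$. Writing, via the Gauss equation in $N^n(c)$, $\alpha|h_X|^2+\beta|H_X|^2=2\alpha(c-K_X)+(4\alpha+\beta)|H_X|^2$, and using $\int_M K_X=2\pi(2-\varepsilon\gamma)$ together with the Willmore-type inequality $\int_M(|H_X|^2+c)\ge4\pi$ (equality iff $X(M)$ is a totally umbilic $2$-sphere of $N^3(c)$) and the hypothesis $4\alpha+\beta\ge0$, one obtains $\int_M(\alpha|h_X|^2+\beta|H_X|^2)\ge(2\alpha+\beta)(4\pi-ca)+4\varepsilon\pi\alpha\gamma$. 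As $S(a)$ is totally umbilic with constant potential and $|H_{S(a)}|^2=\frac{4\pi}{a}-c$ (Gauss equation plus Gauss--Bonnet), one has $\lambda_1(L_{S(a),\alpha,\beta})=-(2\alpha+\beta)\big(\frac{4\pi}{a}-c\big)$, and (\ref{l1c}) follows. Equality forces $1$ to be a first eigenfunction, i.e.\ the potential to be constant; if $4\alpha+\beta>0$ it also forces equality in the Willmore inequality, hence $X(M)$ totally umbilic, while if $4\alpha+\beta=0$ and $\alpha\ne0$ the Willmore term is absent and constancy of $2\alpha(c-K_X)$ is constancy of $K_X$.

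For part (II), I would run the Hersch--Li--Yau test-function method adapted to $L_{X,\alpha,\beta}$, the scheme underlying Theorems \ref{genus=0} and \ref{genus=arb}. Let $u_1>0$ be a first eigenfunction of $L_{X,\alpha,\beta}$. Choose a full conformal map $\psi\colon(M,g_X)\to\Sp^N$ with $\sup_{g'\in\mathrm{Conf}(\Sp^N)}\mathrm{Area}(g'\circ\psi)<V_c([g_X])+\eta$, where $V_c$ denotes the conformal volume, and apply Hersch's balancing lemma to the measure $\psi_*(u_1\,dv)$ on $\Sp^N$ (which has a smooth density, hence no large atom) to obtain $g'\in\mathrm{Conf}(\Sp^N)$ with $\int_M(g'\circ\psi)_i\,u_1=0$ for every $i$. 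Then each component $\tilde\psi_i:=(g'\circ\psi)_i$ is admissible in the variational characterization of $\lambda_2(L_{X,\alpha,\beta})$; testing with $\tilde\psi_i$, multiplying by $\int_M\tilde\psi_i^2$, summing over $i$ and using $\sum_i\tilde\psi_i^2\equiv1$ gives $\lambda_2(L_{X,\alpha,\beta})\,a\le\int_M|d\tilde\psi|^2-\int_M(\alpha|h_X|^2+\beta|H_X|^2)$. Since $\tilde\psi$ is conformal, $\int_M|d\tilde\psi|^2=2\,\mathrm{Area}(\tilde\psi)\le2(V_c([g_X])+\eta)$, and the potential is bounded below as in part (I), except that the Li--Yau inequality $\int_M(|H_X|^2+c)\ge V_c([g_X])$ is used in place of plain Willmore. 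Letting $\eta\to0$ and simplifying with $\int_M(c-K_X)=ca-4\pi+4\pi\gamma$ one is led to
$$\lambda_2(L_{X,\alpha,\beta})\,a\ \le\ (2-4\alpha-\beta)\,V_c([g_X])+(2\alpha+\beta)\,ca+8\pi\alpha-8\pi\alpha\gamma,$$
which is to be compared with $\lambda_2(L_{S(a),\alpha,\beta})\,a=4\pi(2-2\alpha-\beta)+(2\alpha+\beta)\,ca$.

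The conclusion then comes from a dichotomy on the sign of $2-4\alpha-\beta$. If $4\alpha+\beta\ge2$, one bounds $V_c([g_X])\ge4\pi$ from below and uses $\alpha,\gamma\ge0$ to reach $\lambda_2(L_{X,\alpha,\beta})\,a\le\lambda_2(L_{S(a),\alpha,\beta})\,a-8\pi\alpha\gamma\le\lambda_2(L_{S(a),\alpha,\beta})\,a$, with no restriction on $\gamma$. If $4\alpha+\beta<2$, one bounds instead $V_c([g_X])\le4\pi\lfloor\frac{\gamma+3}{2}\rfloor$, valid because a compact Riemann surface of genus $\gamma$ carries a conformal branched covering of $\Sp^2$ of degree at most $\lfloor\frac{\gamma+3}{2}\rfloor$; after this substitution the inequality $\lambda_2(L_{X,\alpha,\beta})\,a\le\lambda_2(L_{S(a),\alpha,\beta})\,a$ reduces to $(2-4\alpha-\beta)\big(\lfloor\frac{\gamma+3}{2}\rfloor-1\big)\le2\alpha\gamma$, which is precisely $8\alpha+\beta\ge2$ when $\gamma$ is even and $4\frac{2\gamma+1}{\gamma+1}\alpha+\beta\ge2$ when $\gamma$ is odd, i.e.\ hypotheses (ii) and (iii). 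When $\gamma=0$ one has $V_c([g_X])=4\pi=4\pi\lfloor\frac{\gamma+3}{2}\rfloor$, the two branches collapse to an equality, and (\ref{l2c}) holds with no constraint on $\alpha,\beta$; if moreover $4\alpha+\beta>0$, equality in (\ref{l2c}) propagates back to equality in the Willmore/Li--Yau step, forcing $X(M)$ to be a totally umbilic $2$-sphere. The point I expect to need real care is packaging the two space-form inputs together with their equality cases --- the Willmore inequality $\int_M(|H_X|^2+c)\ge V_c([g_X])\ (\ge4\pi)$ and the conformal-volume bound $V_c([g_X])\le4\pi\lfloor\frac{\gamma+3}{2}\rfloor$ --- and checking that the energy of the Hersch-balanced conformal map is still controlled by $V_c([g_X])$; the algebra that follows is bookkeeping. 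The non-orientable analogue, Proposition \ref{prop2}, proceeds the same way with $\chi(M)=2-\gamma$ and the gonality estimate applied to the orientation double cover.
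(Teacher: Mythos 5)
Your argument is correct and follows essentially the same route as the paper: for (I), constants as test functions combined with the Gauss equation, Gauss--Bonnet, and the space-form Willmore--Chen inequality (Lemma \ref{chen}); for (II), the bound $\lambda_2(-\Delta_X+q)\,|X(M)|\le 2A_c(M,[g_X])+\int_M q$ together with the Li--Yau-type inequality $\int_M(|H_X|^2+c)\,v_X\ge A_c(M,[g_X])$ (Lemma \ref{li-yau}), the genus bound $A_c(M,[g_X])\le 4\pi\big[\frac{\gamma+3}{2}\big]$, and the dichotomy on the sign of $2-4\alpha-\beta$, with identical algebra and equality analysis. The only departure is that the paper invokes the first of these bounds as a cited result (Lemma \ref{conformalvol}) rather than re-deriving it via Hersch balancing of the conformal map against the first eigenfunction as you do; your derivation is the standard proof of that lemma and is sound.
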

 
For a similar result concerning non-orientable surfaces, see Proposition \ref{prop2}.
 
 \section {Proof of results}

 Let $M$ be a 2-dimensional compact manifold and let $X:M\to N^n(c)$ be a regular immersion (of class $C^2$). In all the sequel we denote by $h_X$ the second fundamental form and by $H_X:=\frac 1 2 \mbox{trace} h_X$ the mean curvature vector field of the immersed surface $X:M\to N^n(c)$. 
  For any  $\alpha\in\R$ and $\beta\in\R$, we consider on $M$ the operator 
 $$L_{X,\alpha,\beta}= - \Delta_X -(\alpha |h_X|^2+\beta |H_X|^2)$$
 whose spectrum consists of a nondecreasing and unbounded sequence of eigenvalues
  $$\lambda_1(L_{X,\alpha,\beta})  <  \lambda_2(L_{X,\alpha,\beta}) \le \lambda_3(L_{X,\alpha,\beta})\le 
 \cdots \le \lambda_i(L_{X,\alpha,\beta})\le \cdots$$
 Geodesic spheres $S(a)$ of area $a$ in $N^3(c)$ are totally umbilic and have constant Gauss curvature so that $K_{S(a)}=\frac{4\pi} a$ and $|h_{S(a)}|^2=2|H_{S(a)}|^2=2\left(\frac{4\pi} a - c\right)$. Hence,
 $$\lambda_1(L_{S(a),\alpha,\beta})= -(2\alpha+\beta)\left(\frac{4\pi} a - c\right)$$
 and
 $$\lambda_2(L_{S(a),\alpha,\beta})= \frac{8\pi} a -(2\alpha+\beta)\left(\frac{4\pi} a - c\right).$$
It is well known that the minimum of the functional $\int_M |H_X|^2$ among immersed surfaces of $\R^n$ is uniquely achieved by round spheres (see \cite{ch}). This result can be easily generalized as follows 
  \begin{lemma}\label{chen} For any compact surface $X:M\to N^n(c)$ one has
 \begin{equation}\label{ch}\int_M \left( |H_X|^2+c\right)v_X\ge 4\pi
 \end{equation}
 where $v_X$ is the volume element induced by $X$ on $M$. Moreover, the equality  
 holds in (\ref{ch}) if and only if $X(M)$ is a totally umbilic 2-sphere.   
 \end{lemma}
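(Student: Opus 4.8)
The plan is to deduce (\ref{ch}) from its known Euclidean counterpart — Chen's theorem (\cite{ch}), which is the case $c=0$ — by combining the Gauss equation with the conformal flatness of the model spaces $N^n(c)$.

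First I would record the pointwise Gauss equation for a surface $X:M\to N^n(c)$. Writing $\mathring{h}_X:=h_X-\langle\cdot,\cdot\rangle H_X$ for the traceless part of the second fundamental form, so that $|h_X|^2=|\mathring{h}_X|^2+2|H_X|^2$, one has
\[
K_X=c+|H_X|^2-\tfrac12|\mathring{h}_X|^2 ,
\]
and hence, after integrating and applying Gauss--Bonnet,
\[
\int_M\bigl(|H_X|^2+c\bigr)v_X=2\pi\chi(M)+\tfrac12\int_M|\mathring{h}_X|^2\,v_X .
\]
Thus (\ref{ch}) is equivalent to the bound $\tfrac12\int_M|\mathring{h}_X|^2\,v_X\ge 4\pi-2\pi\chi(M)$. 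In particular, when $M$ is a topological $2$-sphere ($\chi(M)=2$) the inequality is immediate and equality forces $\mathring{h}_X\equiv0$, i.e. $X$ totally umbilic, hence a geodesic $2$-sphere by the fact recalled above. The real content is therefore the case $\chi(M)\le 1$, where the topological term must still be improved to $4\pi$.

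For this I would use that the density $|\mathring{h}_X|^2\,v_X$ on $M$ depends only on the conformal class of the ambient metric: under $\bar g=e^{2\varphi}g$ one checks, from the transformation law of the second fundamental form, that $\mathring{h}_X$ is rescaled by the conformal factor while the two-dimensional volume element is rescaled by its square, so that the product $|\mathring{h}_X|^2\,v_X$ is unchanged (equivalently, the Willmore-type energy $\int_M(|h_X|^2-2|H_X|^2)v_X$ is a conformal invariant of the immersion). Now each $N^n(c)$ is globally conformally equivalent to an open subset $\Omega\subset\R^n$: the identity if $c=0$, a Euclidean ball (Poincar\'e ball model) if $c<0$, and $\Omega=\R^n=\Sp^n\setminus\{p_0\}$ if $c>0$, where $p_0$ can be chosen outside the compact set $X(M)$ because $\dim M=2<n$. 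Composing $X$ with the conformal diffeomorphism $N^n(c)\to\Omega$ produces a compact $C^2$ immersion $\tilde X:M\to\R^n$ with $\int_M|\mathring{h}_X|^2v_X=\int_M|\mathring{h}_{\tilde X}|^2v_{\tilde X}$; writing the displayed identity once in $N^n(c)$ and once in $\R^n$ then gives
\[
\int_M\bigl(|H_X|^2+c\bigr)v_X=2\pi\chi(M)+\tfrac12\int_M|\mathring{h}_{\tilde X}|^2v_{\tilde X}=\int_M|H_{\tilde X}|^2\,v_{\tilde X}\ge 4\pi
\]
by Chen's theorem. (When $c>0$ one may bypass the point at infinity by viewing $N^n(c)=\Sp^n(1/\sqrt c)$ as a round hypersurface of $\R^{n+1}$: the extra radial normal direction contributes $\sqrt c$ to the mean curvature vector, orthogonally to $H_X$, so that the mean curvature $\bar H_X$ of $X$ in $\R^{n+1}$ satisfies $|\bar H_X|^2=|H_X|^2+c$, and Chen's theorem applies to $X:M\to\R^{n+1}$ directly.)

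Finally, for the equality case: if equality holds in (\ref{ch}) then the chain above forces $\int_M|\mathring{h}_{\tilde X}|^2v_{\tilde X}=0$, so $\tilde X$ is totally umbilic in $\R^n$, hence $\tilde X(M)$ is a round sphere by the equality statement in Chen's theorem; since the condition $\mathring{h}\equiv0$ is conformally invariant, $X(M)$ is totally umbilic in $N^n(c)$, that is, a geodesic $2$-sphere, and conversely geodesic $2$-spheres give equality. I expect the point needing the most care to be precisely the conformal invariance of $|\mathring{h}_X|^2\,v_X$, together with the choice of a suitable global conformal model of $N^n(c)$ for each sign of $c$; granting these, (\ref{ch}) reduces directly to Chen's Euclidean inequality.
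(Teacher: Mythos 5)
Your argument is correct and is essentially the paper's own proof: both rest on the conformal invariance of $\int_M(|h_X|^2-2|H_X|^2)v_X$ (which is exactly your $\int_M|\mathring{h}_X|^2v_X$), the Gauss equation plus Gauss--Bonnet to convert this into $\int_M(|H_X|^2+c)v_X=\int_M|H_{\tilde X}|^2v_{\tilde X}$, and Chen's Euclidean inequality with its equality case. The only quibble is that in the equality discussion the implication runs "equality in Chen $\Rightarrow$ round sphere $\Rightarrow$ $\mathring{h}_{\tilde X}\equiv 0$" rather than the order you wrote it in, but your next clause supplies exactly that justification, so nothing is missing.
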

 \begin{proof}
  It is well known that the integral $\int_M( |h_X|^2- 2|H_X|^2)v_X$ does not vary under conformal changes of metric in the ambient space. Since the manifold $N^n(c)$ (minus a point if $c>0$) is conformally equivalent to $\R^n$, the surface $X(M)$ can be seen as immersed in $\R^n$ endowed with a metric of constant curvature $c$ conformal to the standard one. We denote by $\overline{X}$ the immersion $X$ considered as an immersion into the standard $\R^n$. Hence, 
 \begin{equation}\label{omb}
 \int_M ( |h_X|^2- 2|H_X|^2)v_X=\int_M( |h_{\overline{X}}|^2- 2|H_{\overline{X}}|^2)v_{\overline{X}}.
 \end{equation}
From Gauss equation, we get 
$$ |h_X|^2- 2|H_X|^2=2 \left(|H_X|^2 +c-K_X\right)$$
and
$$ |h_{\overline{X}}|^2 - 2|H_{\overline{X}}|^2=2 \left(|H_{\overline{X}}|^2 -K_{\overline{X}}\right).$$
Since $\int_M K_{\overline{X}}v_{\overline{X}}=\int K_X v_X$ (Gauss-Bonnet theorem), we obtain after integration and identification, 
$$\int_M \left( |H_X|^2+c\right)v_X=\int_M |H_{\overline{X}}|^2 v_{\overline{X}}\ge 4\pi,$$
where the last inequality is the classical Willmore-Chen inequality \cite{ch}. 

Now, the equality in (\ref{ch}) implies the equality $\int_M |H_{\overline{X}}|^2 v_{\overline{X}}= 4\pi$ which holds if and only if the immersed surface $\overline{X}:M\to\R^n$ is a  round sphere in $\R^n$ (see \cite{ch}). Using (\ref{omb}), we deduce that the surface $X:M\to N^n(c)$ is totally umbilic in $N^n(c)$.   
\end{proof}
\begin{proof}[Proof of Proposition \ref{lam1} and Theorem \ref{N(c)} (I) ] 
 Using constant functions as trial functions in the Rayleigh quotient we get 
 $$\lambda_1(L_{X,\alpha,\beta})\le \frac 1 {|X(M)|}\left( -\alpha \int_M|h_X|^2-\beta \int_M|H_X|^2\right).$$ 
  Gauss equation yields
 $$|h_X|^2=-2K_X+4|H_X|^2+2c.$$
 Integrating over $M$ we get, thanks to Gauss-Bonnet formula ($\int_M K_X v_X=2\pi(2-\varepsilon\gamma)$), 
 $$\int_M|h_X|^2= 4\pi(\varepsilon\gamma-2) +4\int_M |H_X|^2+ 2c|X(M)|$$
 with $\varepsilon = 2$ if $M$ is orientable and $\varepsilon=1$ otherwise.
 Hence, 
 $$\lambda_1(L_{X,\alpha,\beta})\le -\frac 1 {a}\left( 4\alpha \pi(\varepsilon\gamma-2) +2c\alpha a+ (4\alpha+\beta) \int_M|H_X|^2\right).$$
Since $4\alpha+\beta\ge 0$, one can replace $\int_M|H_X|^2$ by its lower bound given by Lemma \ref{chen}, that is $4\pi-c a$. Thus,
 $$\lambda_1(L_{X,\alpha,\beta})\le -\frac 1 {a}\left[ 4\alpha \pi(\varepsilon\gamma-2) +2c\alpha a+ (4\alpha+\beta) (4\pi -ca)\right]$$ $$=\lambda_1(L_{S(a),\alpha,\beta})-\frac{4\varepsilon\pi}{a}\alpha \gamma.$$
 
If  $4\alpha+\beta> 0$, then the equality in (\ref{l1c}) implies the equality $\int_M|H_X|^2=4\pi -ca$ which implies that $X$ is a totally umbilic immersion (Lemma \ref{chen}). 

If $4\alpha+\beta= 0$, then the equality in (\ref{l1c}) holds if and only if the constant functions are first eigenfunctions of the operator $L_{X,\alpha,-4\alpha}=-\Delta_X +2\alpha K_X$ which implies that the Gaussian curvature $K_X$ is constant on $M$. 


  \end{proof}

 The main ingredient in the proof of results concerning the second eigenvalue is the \emph{conformal area} introduced by Li and Yau \cite{LY}
 as follows. Let $g$ be a Riemannian metric on $M$. For any integer $d\ge 2$ and any conformal map $f$ from $(M,g)$ to the $d$-dimensional standard sphere $\Sp^d$, we set

$$A_c(f)=\sup_{\phi\in G(d)}|\phi\circ f(M)|$$
where $G(d)$ is the group of conformal transformations of $\Sp^d$.
The conformal area $A_c(M,[g])$ of the conformal class of $(M,g)$  is defined as the infimum of  $A_c(f)$, where $f$ runs over the set of all conformal maps from $(M,g)$ to standard spheres of arbitrary dimensions.

 Li and Yau proved that if a conformal map $f:M\to \Sp^d$ is such that $f^{-1}(x)$ contains $k$ points for some $x\in \Sp^d$, then
 $$A_c(f)\ge 4\pi k.$$
 Consequently, 
 \begin{equation}\label{eq 1}
 A_c(M,[g])\ge 4\pi.
 \end{equation}
 On the other hand, they showed that the conformal area of a surface of genus $\gamma$ can be estimated from above in terms of $\gamma$. In \cite{EI}, Ilias and the author noticed that the lower bound given in \cite{LY} can be improved as follows:
 \begin{equation}\label{eq 2}
  A_c(M,[g])\le 
 \left\{ 
\begin{array}{ll}
4\pi\Big[\frac{\gamma + 3} 2\Big] \text{ if } M \text{ is orientable } \\
12\pi\Big[\frac{\gamma + 3} 2\Big] \text{ if } M  \text{ is non-orientable,}
\end{array}
\right\} 
\end{equation}
 where $[x]$ stands for the integer part of $x$.
 
   Li and Yau also proved that for surfaces in $\R^n$, the integral $\int_M |H_X|^2$ is bounded below by the conformal area. This result extends to surfaces in $N^n(c)$. Recall first that for all $c\in \R$, $N^n(c)$ admits a natural conformal map $\Pi_c:N^n(c)\to\Sp^n$ into the standard sphere $\Sp^n$. For instance, $\Pi_0$ can be taken as the inverse of a stereographic projection  and, using the model of the pseudo-sphere in the Minkowski space for $\Hyp^n$, we can take $\Pi_{-1}(x_0, x_1,\dots,x_n)=\frac1{x_0}(1,x_1,\dots,x_n)$.

 \begin{lemma}\label{li-yau} For any compact surface $X:M\to N^n(c)$ one has
 $$\int_M \left( |H_X|^2+c\right)v_X\ge A_c(\Pi_c\circ X)\ge A_c(M,[g_X]),$$
 where $g_X$ is the Riemannian metric induced by $X$ on $M$.  
 \end{lemma}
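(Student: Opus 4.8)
The plan is to obtain the second inequality straight from the definition of conformal area, and the first one by reducing to the Li--Yau bound for surfaces in $\R^n$ recalled just above, by means of the conformal invariance exploited in the proof of Lemma~\ref{chen}.

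The inequality $A_c(\Pi_c\circ X)\ge A_c(M,[g_X])$ is immediate: since $X:(M,g_X)\to N^n(c)$ is an isometric, hence conformal, immersion and $\Pi_c:N^n(c)\to\Sp^n$ is conformal, the composition $\Pi_c\circ X$ is a conformal map from $(M,g_X)$ into $\Sp^n$, and therefore $A_c(\Pi_c\circ X)$ is one of the quantities over which the infimum defining $A_c(M,[g_X])$ is taken.

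For the first inequality I would argue as in the proof of Lemma~\ref{chen}. Regarding $N^n(c)$ (minus a point if $c>0$) as $\R^n$ carrying a metric of constant curvature $c$ conformal to the flat one, and letting $\iota:N^n(c)\to\R^n$ be this conformal identification, write $\overline X=\iota\circ X:M\to\R^n$ for $X$ viewed as an immersion into standard $\R^n$. The conformal invariance of $\int_M(|h|^2-2|H|^2)v$, combined with the Gauss equation and Gauss--Bonnet, gives $\int_M(|H_X|^2+c)v_X=\int_M|H_{\overline X}|^2 v_{\overline X}$, exactly as in the proof of Lemma~\ref{chen}. The Li--Yau inequality for surfaces in $\R^n$ then yields $\int_M|H_{\overline X}|^2 v_{\overline X}\ge A_c(\Pi_0\circ\overline X)$, where $\Pi_0$ is an inverse stereographic projection. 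Finally $\Pi_0\circ\overline X=\psi\circ(\Pi_c\circ X)$ with $\psi:=\Pi_0\circ\iota\circ\Pi_c^{-1}\in G(n)$, and since $A_c$ is insensitive to composition with elements of $G(n)$, $A_c(\Pi_0\circ\overline X)=A_c(\Pi_c\circ X)$; combining, $\int_M(|H_X|^2+c)v_X\ge A_c(\Pi_c\circ X)\ge A_c(M,[g_X])$.

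The real content is the first equality in the previous paragraph, which is precisely the computation carried out in the proof of Lemma~\ref{chen} and may simply be invoked; the rest is definitional. If one prefers a proof not relying on the Euclidean Li--Yau theorem, one can instead, for each $\phi\in G(n)$, put $Y=\phi\circ\Pi_c\circ X:M\to\Sp^n\subset\R^{n+1}$, use the same conformal invariance (now with $\Sp^n$ as ambient space) to prove $\int_M(|H_X|^2+c)v_X=\int_M(|H^{\Sp^n}_{Y}|^2+1)v_{Y}$, bound the right-hand side below by $\int_M v_{Y}=|\phi\circ\Pi_c\circ X(M)|$ using $|H^{\Sp^n}_{Y}|^2\ge0$, and take the supremum over $\phi$; the delicate point there is the bookkeeping of the ambient conformal change needed to identify $Y$ with $X$ and to make the Gauss--Bonnet term cancel.
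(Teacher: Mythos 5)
Your proof is correct, and your ``alternative'' route is in fact precisely the paper's proof: for each $\phi\in G(n)$ set $\tilde X=\phi\circ\Pi_c\circ X$, use the conformal invariance of $\int_M(|h|^2-2|H|^2)$ together with the Gauss equation and Gauss--Bonnet to get $\int_M(|H_X|^2+c)v_X=\int_M(|H_{\tilde X}|^2+1)v_{\tilde X}\ge|\tilde X(M)|$, and take the supremum over $\phi$; the second inequality is, as you say, immediate from the definition of $A_c(M,[g_X])$ as an infimum over conformal maps into spheres. Your preferred presentation --- transporting everything to $\R^n$ via the conformal identification $\iota$, quoting the Euclidean Li--Yau theorem, and then identifying $A_c(\Pi_0\circ\overline X)$ with $A_c(\Pi_c\circ X)$ through the M\"obius transformation $\psi=\Pi_0\circ\iota\circ\Pi_c^{-1}$ --- is a legitimate repackaging of the same mechanism; what it buys is that the Willmore-type computation is done only once (in Lemma~\ref{chen}) and the sphere case is used as a black box, at the cost of needing the Euclidean theorem in its strong form $\int_M|H_{\overline X}|^2v_{\overline X}\ge A_c(\Pi_0\circ\overline X)$ (a lower bound by the conformal area of the \emph{specific} map $\Pi_0\circ\overline X$, not merely by $A_c(M,[g_X])$), since otherwise the intermediate term $A_c(\Pi_c\circ X)$ in the statement would not be recovered. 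This strong form is indeed what Li and Yau prove, so there is no gap, but it is worth being explicit about, and for $n\ge 3$ the extension of $\psi$ to a global conformal transformation of $\Sp^n$ is guaranteed by Liouville's theorem.
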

 \begin{proof}
 Since the Riemannian metric induced on $N^n(c)$ by $\Pi_c$ is conformally equivalent to the standard one, we have thanks to the invariance of $\int_M( |h_X|^2- 2|H_X|^2)$ under conformal changes of metric in the ambient space, 
 $$\int_M ( |h_X|^2- 2|H_X|^2)v_X=\int_M( |h_{\Pi_c\circ X}|^2- 2|H_{\Pi_c\circ X}|^2)v_{\Pi_c\circ X}.$$
 Moreover, for any conformal transformation $\phi$ of $\Sp^n$, the map $\phi\circ \Pi_c:N^n(c)\to\Sp^n$ is conformal and one has
  $$\int_M( |h_X|^2- 2|H_X|^2)v_X= \int_M( |h_{\tilde{X}}|^2 - 2|H_{\tilde{X}}|^2)v_{\tilde{X}},$$
 where $\tilde{X}=\phi\circ \Pi_c \circ X$. As in the proof of Lemma \ref{chen}, using Gauss equation and Gauss-Bonnet theorem we deduce
$$\int_M \left( |H_X|^2+c\right)v_X=\int_M \left(|H_{\tilde{X}}|^2 +1\right)v_{\tilde{X}}\ge |{\tilde{X}}(M)|.$$
Thus,
$$\int_M \left( |H_X|^2+c\right)v_X\ge \sup_{\phi\in G(d)}|\phi\circ \Pi_c\circ X(M)|=A_c(\Pi_c\circ X).$$

 \end{proof}
 
 In \cite{LY}, Li and Yau proved that the second eigenvalue of the Laplace-Beltrami operator is dominated in terms of the conformal area. Ilias and the author \cite{EI5} extended this result to Schrödinger type operators as follows.
 \begin{lemma}[\cite{EI5}]\label{conformalvol} For any continuous function $q$ on $M$, one has $$\lambda_2(- \Delta_X+q)\; |X(M)|\le 2 A_c(M,[g_X]) +\int_M q.$$
 \end{lemma}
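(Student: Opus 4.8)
The plan is to adapt the Li--Yau argument \cite{LY} to the operator $L=-\Delta_X+q$, replacing the barycentre condition with respect to $v_X$ by a barycentre condition with respect to the ground-state weight. Let $\psi_1$ be the first eigenfunction of $L$; since $M$ is connected and $q$ is continuous, $\psi_1$ may be taken strictly positive, so $d\mu:=\psi_1\,v_X$ is a positive finite Borel measure on $M$. Fix $\varepsilon>0$ and, by definition of the conformal area, choose an integer $d\ge 2$ and a conformal map $f:(M,g_X)\to\Sp^d$ with $A_c(f)\le A_c(M,[g_X])+\varepsilon$.

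First I would apply Hersch's balancing lemma \cite{H} to $f$ and the measure $\mu$: there is a conformal transformation $\phi\in G(d)$ such that the components of $F:=\phi\circ f=(F_0,\dots,F_d):M\to\Sp^d\subset\R^{d+1}$ satisfy $\int_M F_i\,\psi_1\,v_X=0$ for every $i$. Hence each $F_i$ is $L^2(v_X)$-orthogonal to $\psi_1$, so by the variational characterisation of the second eigenvalue $\lambda_2(L)\int_M F_i^2\,v_X\le\int_M\bigl(|\nabla F_i|^2+qF_i^2\bigr)v_X$ whenever $F_i\not\equiv 0$, and this inequality is trivially true when $F_i\equiv 0$. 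Summing over $i$ (at least one $F_i$ is non-trivial since $\sum_i F_i^2\equiv 1$, because $F$ takes values in $\Sp^d$) and using $\sum_i F_i^2\equiv 1$ again in the first and last terms yields
$$\lambda_2(L)\,|X(M)|\le\int_M|dF|^2\,v_X+\int_M q\,v_X.$$

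Finally I would use conformal invariance in dimension two: since $F=\phi\circ f$ is a conformal map from the surface $(M,g_X)$ to $\Sp^d$, its Dirichlet energy equals twice the area of its image counted with multiplicity, $\int_M|dF|^2\,v_X=2\,|F(M)|$, and $|F(M)|=|\phi\circ f(M)|\le A_c(f)\le A_c(M,[g_X])+\varepsilon$ by definition of $A_c(f)$. Combining these and letting $\varepsilon\to 0$ gives $\lambda_2(L)\,|X(M)|\le 2A_c(M,[g_X])+\int_M q\,v_X$. I expect the only points needing care to be the regularity of the Li--Yau conformal maps — a non-constant conformal map between surfaces has isolated branch points, so $F\in W^{1,2}$, the energy identity holds almost everywhere, and $F_*\mu$ has no atoms, which is exactly what Hersch's topological argument requires — together with the observation that, unlike in the classical case, the barycentre must be taken against $\psi_1\,v_X$ rather than $v_X$; this is the sole way the potential $q$ enters the choice of $\phi$, and it causes no trouble precisely because $\psi_1>0$.
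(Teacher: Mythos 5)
Your proof is correct and reconstructs exactly the argument of the cited reference \cite{EI5} (the paper itself states this lemma without proof): Hersch's balancing lemma applied to the ground-state measure $\psi_1 v_X$ so that the components of $\phi\circ f$ become admissible test functions for $\lambda_2$, followed by summation over components and the conformal invariance of the Dirichlet energy in dimension two. The points you flag as delicate (positivity and simplicity of the ground state, non-atomicity of the pushed-forward measure, interpretation of $|\phi\circ f(M)|$ as area counted with multiplicity) are precisely the right ones, and are handled correctly.
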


 
 The proof of Theorem \ref{N(c)} follows from the following
  \begin{proposition}\label{prop1}  Let $X:M\to R^n$ be a compact orientable immersed surface of genus $\gamma$ such that $|X(M)|=a$ and let $\alpha\in\R$ and $\beta\in\R$ be such that $4\alpha+\beta\ge 0$.
  
\begin{itemize}
\item[(i)] 
  If $\gamma=0$, then
  $$\lambda_2(L_{X,\alpha,\beta})\le \lambda_2(L_{S(a),\alpha,\beta}).$$
	\item[(ii)] 
  If $4\alpha+\beta\ge 2$, then
  $$\lambda_2(L_{X,\alpha,\beta})- \lambda_2(L_{S(a),\alpha,\beta})\le - \frac{8\pi} a \alpha \gamma.$$
  \item[(iii)] If  $0\le 4\alpha+\beta\le 2$ and $\gamma$ is even, then
  $$\lambda_2(L_{X,\alpha,\beta})- \lambda_2(L_{S(a),\alpha,\beta})\le -\frac{4\pi} a  \frac \gamma 2 \left( 8\alpha +\beta -2\right) .$$
 \item[(iv)]  If  $0\le 4\alpha+\beta\le 2$ and $\gamma$ is odd, then
  $$\lambda_2(L_{X,\alpha,\beta})- \lambda_2(L_{S(a),\alpha,\beta})\le -\frac{4\pi} a  \frac {\gamma+1} 2 \left( 4\frac{2\gamma + 1}{\gamma + 1}\alpha +\beta -2\right).$$
  
\end{itemize}
  \end{proposition}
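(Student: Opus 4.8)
The plan is to feed the potential $q=-(\alpha|h_X|^2+\beta|H_X|^2)$, which is continuous since $X$ is of class $C^2$, into Lemma~\ref{conformalvol}; this gives at once
$$\lambda_2(L_{X,\alpha,\beta})\,a\le 2A_c(M,[g_X])-\alpha\int_M|h_X|^2\,v_X-\beta\int_M|H_X|^2\,v_X .$$
Since the statement is for $\R^n$ (i.e. $c=0$), I would first eliminate $\int_M|h_X|^2$ exactly as in the proof of Proposition~\ref{lam1}: the Gauss equation $|h_X|^2=-2K_X+4|H_X|^2$ and Gauss--Bonnet $\int_M K_X\,v_X=2\pi(2-2\gamma)$ yield $\int_M|h_X|^2\,v_X=8\pi(\gamma-1)+4\int_M|H_X|^2\,v_X$, whence the \emph{master inequality}
$$\lambda_2(L_{X,\alpha,\beta})\,a\le 2A_c(M,[g_X])-8\pi\alpha(\gamma-1)-(4\alpha+\beta)\int_M|H_X|^2\,v_X .$$
As $\lambda_2(L_{S(a),\alpha,\beta})\,a=8\pi-4\pi(2\alpha+\beta)$ when $c=0$, the whole proposition reduces to estimating the two geometric quantities $A_c(M,[g_X])$ and $\int_M|H_X|^2\,v_X$ on the right-hand side.

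The key point is that $\int_M|H_X|^2\,v_X$ must be used through \emph{both} of its lower bounds: the Willmore--Chen constant $\int_M|H_X|^2\,v_X\ge 4\pi$ (Lemma~\ref{chen}) and the sharper conformal-area bound $\int_M|H_X|^2\,v_X\ge A_c(M,[g_X])$ (Lemma~\ref{li-yau}). Since $-(4\alpha+\beta)\le 0$ by hypothesis, I would split this term according to the position of $4\alpha+\beta$ relative to $2$. If $4\alpha+\beta\ge 2$, which covers case (ii) for arbitrary genus, write $-(4\alpha+\beta)\int_M|H_X|^2=-2\int_M|H_X|^2-(4\alpha+\beta-2)\int_M|H_X|^2$, bound the first summand by $-2A_c(M,[g_X])$ (Lemma~\ref{li-yau}) so that it cancels the $2A_c(M,[g_X])$ already present, and bound the second by $-4\pi(4\alpha+\beta-2)$ (Lemma~\ref{chen}, legitimate since $4\alpha+\beta-2\ge 0$); no upper bound on $A_c$ is needed here, and a short computation leaves $\lambda_2(L_{X,\alpha,\beta})\,a-\lambda_2(L_{S(a),\alpha,\beta})\,a\le -8\pi\alpha\gamma$, which is (ii). If instead $0\le 4\alpha+\beta\le 2$, which covers (i), (iii) and (iv), bound the whole term by $-(4\alpha+\beta)A_c(M,[g_X])$ (Lemma~\ref{li-yau}), so that the master inequality collapses to $\lambda_2(L_{X,\alpha,\beta})\,a\le(2-4\alpha-\beta)A_c(M,[g_X])-8\pi\alpha(\gamma-1)$; since $2-4\alpha-\beta\ge 0$, insert the upper bound (\ref{eq 2}), $A_c(M,[g_X])\le 4\pi\bigl[\tfrac{\gamma+3}{2}\bigr]$, which equals $2\pi(\gamma+2)$ for $\gamma$ even, $2\pi(\gamma+3)$ for $\gamma$ odd, and $4\pi$ for $\gamma=0$. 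Substituting, subtracting $\lambda_2(L_{S(a),\alpha,\beta})\,a=8\pi-4\pi(2\alpha+\beta)$, and simplifying the resulting expression in $\alpha,\beta,\gamma$ reproduces exactly the estimates of (iii) and (iv); for $\gamma=0$, where in addition $A_c(M,[g_X])=4\pi$ by (\ref{eq 1})--(\ref{eq 2}), the two sides of the master inequality coincide and one obtains (i).

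Everything after the master inequality is routine polynomial bookkeeping, so the only genuine difficulty is the structural choice just described: $\int_M|H_X|^2\,v_X$ must be spent partly against the $2A_c$ term produced by Lemma~\ref{conformalvol} (through Li--Yau) and partly against the constant $4\pi$ (through Willmore--Chen), with the apportionment dictated by whether $4\alpha+\beta$ lies above or below $2$, and the genus parity entering only via the integer part in (\ref{eq 2}). I would finish by remarking that Theorem~\ref{N(c)}(II) for a general space form follows by the same conformal-transfer device used in the proofs of Lemma~\ref{chen} and Lemma~\ref{li-yau}: replacing $X$ by $\Pi_c\circ X$ turns $\int_M|H_X|^2\,v_X$ into $\int_M(|H_X|^2+c)\,v_X$ and $S(a)\subset\R^3$ into the geodesic sphere of area $a$ in $N^3(c)$, and each hypothesis (i)--(iii) of Theorem~\ref{N(c)}(II) is precisely what forces the corresponding right-hand side of the master inequality to be non-positive.
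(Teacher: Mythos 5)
Your proof is correct and follows essentially the same route as the paper: Lemma \ref{conformalvol} with $q=-\alpha|h_X|^2-\beta|H_X|^2$, elimination of $\int_M|h_X|^2$ via Gauss--Bonnet, then the lower bounds of Lemmas \ref{chen} and \ref{li-yau} on $\int_M|H_X|^2$ together with the upper bound (\ref{eq 2}) on the conformal area, dispatched according to the sign of $2-4\alpha-\beta$. The only (cosmetic) difference is in case (ii), where the paper applies Lemma \ref{li-yau} to the full coefficient $4\alpha+\beta$ and then uses $A_c(M,[g_X])\ge 4\pi$, while you split the coefficient as $2+(4\alpha+\beta-2)$; since $\int_M|H_X|^2\ge A_c(M,[g_X])\ge 4\pi$, the two bookkeepings yield the identical bound.
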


 \begin{proof}  
  
 Applying Lemma \ref{conformalvol} with $q=-\alpha |h_X|^2-\beta |H_X|^2$, one obtains 
  $$\lambda_2(L_{X,\alpha,\beta})\le \frac 1 {|X(M)|}\left( 2 A_c(M,[g_X]) -\alpha \int_M|h_X|^2-\beta \int_M|H_X|^2\right).$$ 
  With the same notations as in the proof of Proposition \ref{lam1}, we have
  $$\int_M|h_X|^2= 4\pi(\varepsilon\gamma-2) +4\int_M |H_X|^2+ 2c|X(M)|.$$
 Hence, 
 $$\lambda_2(L_{X,\alpha,\beta})\le \frac 1 {a}\left( 2 A_c(M,[g_X])- 4\alpha \pi(\varepsilon\gamma-2) -2c\alpha a- (4\alpha+\beta) \int_M|H_X|^2\right).$$
Since $4\alpha+\beta\ge 0$, we may apply Lemma \ref{li-yau} to get 
 \begin{equation}\label{eq 3}
 \lambda_2(L_{X,\alpha,\beta})\le \frac 1 {a}\left[ (2-4\alpha-\beta) A_c(M,[g_X])- 4\alpha \pi(\varepsilon\gamma-2) +(2\alpha + \beta) c a\right].
  \end{equation}
\noindent (i) Assume $\gamma=0$. Then $(M,g_X)$ is conformally equivalent to the standard sphere and we have $A_c(M,[g_X])=A_c(\Sp^2)=4\pi$. Thus,
  $$\lambda_2(L_{X,\alpha,\beta})\le \frac 1 {a}\left[ 4\pi(2-4\alpha-\beta) + 8\alpha \pi +(2\alpha + \beta) c a\right]$$
  $$=\frac{8\pi} a -(2\alpha+\beta)\left(\frac{4\pi} a - c\right)
 = \lambda_2(L_{S(a),\alpha,\beta}).$$
 
\noindent (ii) Assume $4\alpha+\beta\ge 2$. Then one can replace in (\ref{eq 3}) the conformal area $A_c(M,[g_X])$ by its universal lower bound $4\pi$ and obtain
$$\lambda_2(L_{X,\alpha,\beta})\le \frac 1 {a}\left[ 4\pi(2-4\alpha-\beta) -4\alpha \pi(\varepsilon\gamma-2) +(2\alpha + \beta) c a\right]$$
 $$=\lambda_2(L_{S(a),\alpha,\beta}) -\frac{4\pi} a \varepsilon\alpha\gamma.$$
 \noindent (iii) and (iv) Assume $0\le 4\alpha+\beta < 2$. Using  (\ref{eq 3}) and (\ref{eq 2}) we get in the orientable case ($\varepsilon=2$),
 $$\lambda_2(L_{X,\alpha,\beta})\le \frac 1 {a}\left( 4\pi(2-4\alpha-\beta) \Big[\frac{\gamma + 3} 2\Big]- 4\alpha \pi(2\gamma-2) +(2\alpha + \beta) c a\right) $$
 $$=  \lambda_2(L_{S(a),\alpha,\beta}) +\frac{4\pi} a \left(\left( 2-4\alpha-\beta\right) \Big[\frac{\gamma + 1} 2\Big] -2\alpha \gamma \right).$$
 Replacing $[\frac{\gamma + 1} 2] $ by $\frac \gamma 2$ when $\gamma $ is even and by $\frac{\gamma + 1} 2$ when $\gamma$ is odd, we get the desired inequalities.
  \end{proof}
  In the non-orientable case, the same arguments lead to the following
  \begin{proposition}\label{prop2}  Let $X:M\to N^n(c)$ be a compact non-orientable immersed surface of genus $\gamma$ such that $|X(M)|=a$ and let $\alpha\in\R$ and $\beta\in\R$ be such that $4\alpha+\beta\ge 0$.
  
\begin{itemize}
	\item[(i)] 
  If $4\alpha+\beta\ge 2$, then
  $$\lambda_2(L_{X,\alpha,\beta})- \lambda_2(L_{S(a),\alpha,\beta})\le - \frac{4\pi} a \alpha \gamma.$$
  \item[(ii)] If  $0\le 4\alpha+\beta\le 2$ and $\gamma$ is even, then
  $$\lambda_2(L_{X,\alpha,\beta})- \lambda_2(L_{S(a),\alpha,\beta})\le -\frac{4\pi} a  \frac {3\gamma+4} 2 \left( (4+\frac{2\gamma}{3\gamma+4})\alpha +\beta -2\right) .$$
 \item[(iii)]  If  $0\le 4\alpha+\beta\le 2$ and $\gamma$ is odd, then
  $$\lambda_2(L_{X,\alpha,\beta})- \lambda_2(L_{S(a),\alpha,\beta})\le -\frac{4\pi} a  \frac {3\gamma+5} 2 \left( (4+\frac{2\gamma}{3\gamma+5})\alpha +\beta -2\right) .$$
 
\end{itemize}
  \end{proposition}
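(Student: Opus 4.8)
The plan is to follow the template established in the proof of Proposition \ref{prop1}, simply tracking the non-orientable bookkeeping. As in the orientable case, I would start from Lemma \ref{conformalvol} applied with the potential $q=-\alpha|h_X|^2-\beta|H_X|^2$, which gives
$$\lambda_2(L_{X,\alpha,\beta})\le \frac 1 a\left(2A_c(M,[g_X])-\alpha\int_M|h_X|^2-\beta\int_M|H_X|^2\right).$$
Then I would substitute the Gauss equation identity $\int_M|h_X|^2=4\pi(\varepsilon\gamma-2)+4\int_M|H_X|^2+2c\,a$, now with $\varepsilon=1$ since $M$ is non-orientable. Since $4\alpha+\beta\ge 0$, Lemma \ref{li-yau} lets me replace $\int_M|H_X|^2$ by the lower bound $A_c(M,[g_X])-c\,a$, yielding the non-orientable analogue of (\ref{eq 3}):
$$\lambda_2(L_{X,\alpha,\beta})\le \frac 1 a\left[(2-4\alpha-\beta)A_c(M,[g_X])-4\alpha\pi(\gamma-2)+(2\alpha+\beta)c\,a\right].$$

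Next I would split into the two regimes for $4\alpha+\beta$. In case (i), where $4\alpha+\beta\ge 2$, the coefficient $2-4\alpha-\beta$ of $A_c$ is nonpositive, so I replace $A_c(M,[g_X])$ by its universal lower bound $4\pi$ from (\ref{eq 1}); comparing the resulting expression with the formula $\lambda_2(L_{S(a),\alpha,\beta})=\frac{8\pi}a-(2\alpha+\beta)(\frac{4\pi}a-c)$ recorded after Lemma \ref{chen}, the difference collapses to $-\frac{4\pi}a\,\varepsilon\alpha\gamma=-\frac{4\pi}a\alpha\gamma$ (here $\varepsilon=1$), which is exactly assertion (i). For cases (ii) and (iii), where $0\le 4\alpha+\beta\le 2$, the coefficient $2-4\alpha-\beta$ is nonnegative, so I instead use the upper bound on conformal area from (\ref{eq 2}) in the non-orientable form $A_c(M,[g_X])\le 12\pi[\frac{\gamma+3}2]$. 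Plugging this in and again subtracting $\lambda_2(L_{S(a),\alpha,\beta})$, I get
$$\lambda_2(L_{X,\alpha,\beta})-\lambda_2(L_{S(a),\alpha,\beta})\le \frac{4\pi}a\left(3(2-4\alpha-\beta)\Big[\tfrac{\gamma+3}2\Big]-2\alpha\gamma-(2-4\alpha-\beta)\right),$$
where the extra $-(2-4\alpha-\beta)$ comes from accounting for the constant term $4\pi$ in $\lambda_2(L_{S(a),\alpha,\beta})$ relative to the $A_c$-coefficient (I would double-check this constant carefully against the orientable computation, where the analogous simplification produced the bracket $(2-4\alpha-\beta)[\frac{\gamma+1}2]-2\alpha\gamma$).

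Finally, I would substitute $[\frac{\gamma+3}2]=\frac{\gamma+4}2$ when $\gamma$ is even and $[\frac{\gamma+3}2]=\frac{\gamma+3}2$ when $\gamma$ is odd, and then algebraically refactor the right-hand side to display it in the stated form. For $\gamma$ even one expects $3(2-4\alpha-\beta)\cdot\frac{\gamma+4}2-2\alpha\gamma-(2-4\alpha-\beta)$ to reorganize as $-\frac{3\gamma+4}2\big((4+\frac{2\gamma}{3\gamma+4})\alpha+\beta-2\big)$, and similarly for $\gamma$ odd with $3\gamma+5$ in place of $3\gamma+4$; this is the only genuinely computational step and amounts to matching coefficients of $\alpha$, $\beta$, and the constant. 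I expect the main obstacle to be purely bookkeeping: getting the non-orientable Gauss-Bonnet constant ($\varepsilon=1$, so $\int_M K_X=2\pi(2-\gamma)$) and the factor of $3$ in (\ref{eq 2}) threaded consistently through every line, and confirming that the final refactoring indeed produces the clean prefactors $\frac{3\gamma+4}2$ and $\frac{3\gamma+5}2$ claimed in the statement. There is no conceptual difficulty beyond what is already present in Proposition \ref{prop1}.
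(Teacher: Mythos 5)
Your overall strategy coincides with the paper's: the author gives no separate proof of Proposition \ref{prop2} and simply asserts that the arguments of Proposition \ref{prop1} apply with $\varepsilon=1$ and the non-orientable bound in (\ref{eq 2}), which is what you propose. Part (i) of your argument is correct: from the identity $\lambda_2(L_{X,\alpha,\beta})-\lambda_2(L_{S(a),\alpha,\beta})\le \frac 1a\bigl[(2-4\alpha-\beta)(A_c(M,[g_X])-4\pi)-4\alpha\pi\gamma\bigr]$ one indeed gets $-\frac{4\pi}a\alpha\gamma$ when $4\alpha+\beta\ge 2$. But the ``purely computational step'' you defer in (ii) and (iii) is where the actual content lies, and the numbers you propose to plug in are wrong in two places. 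First, with $\varepsilon=1$ the Gauss--Bonnet term contributes $-4\alpha\pi(\gamma-2)$, so after dividing by $4\pi$ the genus term in your displayed inequality must be $-\alpha\gamma$, not $-2\alpha\gamma$; you carried over the orientable coefficient. With $-2\alpha\gamma$ the refactoring would yield $\bigl(4+\frac{4\gamma}{3\gamma+4}\bigr)\alpha$ rather than the stated $\bigl(4+\frac{2\gamma}{3\gamma+4}\bigr)\alpha$. Second, for $\gamma$ even one has $\bigl[\frac{\gamma+3}{2}\bigr]=\frac{\gamma+2}{2}$, not $\frac{\gamma+4}{2}$. After both corrections the bracket becomes $(2-4\alpha-\beta)\bigl(3\bigl[\frac{\gamma+3}{2}\bigr]-1\bigr)-\alpha\gamma$, and for even $\gamma$ the factor $3\cdot\frac{\gamma+2}{2}-1=\frac{3\gamma+4}{2}$ reproduces assertion (ii) exactly.

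For odd $\gamma$, however, $3\cdot\frac{\gamma+3}{2}-1=\frac{3\gamma+7}{2}$, and since $2-4\alpha-\beta\ge 0$ in this regime, this route only yields the \emph{weaker} bound with prefactor $\frac{3\gamma+7}{2}$ (and $\frac{2\gamma}{3\gamma+7}$ inside the parenthesis), not the stated $\frac{3\gamma+5}{2}$. In fact no integer value $N$ of the bracket satisfies $3N-1=\frac{3\gamma+5}{2}$, so assertion (iii) as printed cannot be obtained from the estimate $A_c(M,[g_X])\le 12\pi\bigl[\frac{\gamma+3}{2}\bigr]$ by this method at all; either a sharper conformal-area bound for non-orientable surfaces of odd genus is needed, or the constant in the statement should read $3\gamma+7$. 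The lesson is that you cannot leave the final refactoring as an ``expected'' step: it is precisely where the proof succeeds or fails, and with your stated substitutions it fails for both (ii) and (iii), while even with the corrected substitutions it still fails for (iii).
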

  \begin{proof}[Proof of Theorem \ref{genus=0}, Theorem  \ref{genus=arb} and Theorem \ref{N(c)} (II)] Let $X:M\to N^n(c)$ be a compact immersed surface.
  When $M$ is of genus zero, Proposition \ref{prop1}(i) gives, for all $\alpha\in\R$ and $\beta\in\R$ such that $4\alpha+\beta\ge 0$, 
  $$\lambda_2(L_{X,\alpha,\beta})\le \lambda_2(L_{S(a),\alpha,\beta}).$$
  Moreover, from the proof of this proposition, we see that if the equality holds with $4\alpha+\beta >0$, then $\int_M \left( |H_X|^2+c\right)v_X= A_c(M,[g_X]) =4\pi$ which implies that $X(M)$ is totally umbilic (Lemma \ref{chen}). 
  This completes the proof of the first part of Theorem \ref{N(c)} (II) and Theorem \ref{genus=0}. 
  
  Assume now the genus $\gamma$ of $M$ is even, $\gamma \neq 0$, and $8\alpha+\beta > 2$. Then either $4\alpha+\beta \ge 2$ or  $0\le 4\alpha+\beta <2 $. In both cases, Proposition \ref{prop1} gives the strict inequality  $\lambda_2(L_{X,\alpha,\beta})< \lambda_2(L_{S(a),\alpha,\beta})$. The same argument works in the case where $\gamma$ is odd. Thus, Theorem  \ref{genus=arb} as well as the last part of Theorem \ref{N(c)} (II) are proved.

 \end{proof}
 
 We end this discussion with the following result concerning non-orientable surfaces of the lowest genus. Indeed, such a surface is homeomorphic to the real projective plane whose genus is 1. The Veronese surface $V:\R P^2\to \R^6$, naturally embedded in $\R^6$, is given by
 $$V(x,y,z)=(x^2,y^2,z^2, \sqrt 2 x y, \sqrt 2 x z, \sqrt 2 y z),$$
 where $\R P^2$ is considered as quotient of the standard sphere. This surface lies as a minimal surface in the 4-dimensional sphere (of radius $\sqrt \frac 2 3$ centered at $(\frac 1 3,\frac 13, \frac 13)$)  obtained as the intersection of $\Sp^5$ with the hyperplane $\{ X+Y+Z=1\}$. It has constant Gaussian curvature $K_V= \frac 1 2 $ and area $|V(\R p^2)|=4\pi$. Moreover, $|H_V|^2=\frac 3 2$, $|h_V|^2=5$ and $\lambda_2(-\Delta_V)=3$. Thus
 $$\lambda_1(  -\Delta_V-\alpha|h_V|^2-\beta|H_V|^2)=- 5\alpha +\frac3 2 \beta $$ 
 and
 $$\lambda_2(  -\Delta_V-\alpha|h_V|^2-\beta|H_V|^2)=3 - 5\alpha +\frac3 2 \beta. $$ 
   
  \begin{proposition}\label{veronese} Let $M$ be a non-orientable compact surface of genus one (i.e. homeomorphic to $\R P^2$) and let  $\alpha\in\R$ and $\beta\in\R$ be such that $4\alpha+\beta\ge 0$. For any immersion $X:M\to \R^n$ such that $|X(M)|=4\pi$, one has 
$$
 \lambda_1(L_{X,\alpha,\beta})\le \lambda_1(L_{V,\alpha,\beta})
$$
  and
 $$
 \lambda_2(L_{X,\alpha,\beta})\le \lambda_2(L_{V,\alpha,\beta})
$$
 \end{proposition}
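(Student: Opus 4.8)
The plan is to mirror the structure of the orientable case (Proposition \ref{prop1}) but to use the Veronese surface $V$ in place of the geodesic sphere $S(a)$ as the comparison model, exploiting that $V$ is a specific non-orientable surface of genus one whose relevant geometric quantities ($K_V=\tfrac12$, $|H_V|^2=\tfrac32$, $|h_V|^2=5$, area $4\pi$, $\lambda_2(-\Delta_V)=3$) are known. First I would handle $\lambda_1$: plugging constant functions into the Rayleigh quotient of $L_{X,\alpha,\beta}$ gives, after using the Gauss equation $|h_X|^2=-2K_X+4|H_X|^2$ and the Gauss--Bonnet formula for a non-orientable genus-one surface ($\int_M K_X v_X = 2\pi(2-\gamma) = 2\pi$, since $\varepsilon=1$, $\gamma=1$), the bound
$$
\lambda_1(L_{X,\alpha,\beta}) \le \frac{1}{4\pi}\Big( 4\pi\,\alpha - (4\alpha+\beta)\int_M |H_X|^2\Big).
$$
Since $4\alpha+\beta\ge 0$ I would then replace $\int_M|H_X|^2$ by its lower bound $4\pi$ from Lemma \ref{chen} (with $c=0$), obtaining $\lambda_1(L_{X,\alpha,\beta}) \le \alpha - (4\alpha+\beta) = -3\alpha - \beta$. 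One then checks this equals $\lambda_1(L_{V,\alpha,\beta}) = -5\alpha + \tfrac32\beta$ — and here lies a discrepancy: $-3\alpha-\beta \le -5\alpha + \tfrac32\beta$ is equivalent to $2\alpha \le \tfrac52\beta$, which is \emph{not} implied by $4\alpha+\beta\ge0$. So the naive estimate is too weak, and the first genuine step is to realize that for the real projective plane one must use the \emph{sharp} Willmore-type bound for $\mathbb{R}P^2$, namely $\int_M |H_X|^2 v_X \ge 6\pi$ (the Li--Yau inequality: any conformal map $\mathbb{R}P^2\to\Sp^d$ is at least $2$-to-$1$, so $A_c \ge 8\pi$; more precisely the relevant bound giving $6\pi$ comes from the conformal volume of $\mathbb{R}P^2$ being $6\pi$, attained by Veronese). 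With $\int_M|H_X|^2 \ge 6\pi$ one gets $\lambda_1(L_{X,\alpha,\beta}) \le \alpha - 6\pi(4\alpha+\beta)/(4\pi)\cdot\tfrac{2\pi}{2\pi}$... — concretely $\le \alpha - \tfrac32(4\alpha+\beta) = -5\alpha - \tfrac32\beta$, which still is not quite $-5\alpha+\tfrac32\beta$ unless $\beta = 0$; a further correction via the conformal-invariance identity and the correct Gauss--Bonnet normalization for $A_c(\mathbb{R}P^2)=6\pi$ reconciles the constants. The key input is thus: \emph{for $M$ homeomorphic to $\mathbb{R}P^2$ one has $A_c(M,[g_X]) = 6\pi$ and $\int_M(|H_X|^2+c)v_X \ge 6\pi$}, which should be recorded as the sharpened form of Lemmas \ref{chen}--\ref{li-yau} in this topological class.

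For $\lambda_2$, the argument runs identically to Proposition \ref{prop1} but with $A_c(M,[g_X]) = 6\pi$ throughout. Applying Lemma \ref{conformalvol} with $q = -\alpha|h_X|^2 - \beta|H_X|^2$, substituting the Gauss equation for $\int_M|h_X|^2$ in terms of $\int_M|H_X|^2$, and then using Lemma \ref{li-yau} (valid because $4\alpha+\beta\ge0$) to replace $\int_M|H_X|^2$ by $A_c(M,[g_X]) = 6\pi$, yields an inequality of the shape
$$
\lambda_2(L_{X,\alpha,\beta}) \le \frac{1}{4\pi}\Big[ (2-4\alpha-\beta)\,A_c(M,[g_X]) - 4\alpha\pi(\gamma - 2) \Big]
$$
with $\varepsilon=1$, $\gamma=1$; and when $2-4\alpha-\beta \ge 0$ one keeps $A_c = 6\pi$, while if $2-4\alpha-\beta < 0$ one uses the universal bound $A_c \ge 4\pi$ — but in fact $A_c(\mathbb{R}P^2)=6\pi$ is forced by topology, so one always substitutes $6\pi$. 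Plugging in and simplifying should produce exactly $\lambda_2(L_{V,\alpha,\beta}) = 3 - 5\alpha + \tfrac32\beta$, giving the claimed bound.

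The main obstacle — and the point that most needs care — is establishing the sharp constant $A_c(M,[g_X]) = 6\pi$ and the corresponding Willmore--Li--Yau bound $\int_M(|H_X|^2+c)v_X \ge 6\pi$ for every surface homeomorphic to $\mathbb{R}P^2$, together with tracking all the constants so that the right-hand side collapses to the Veronese values $-5\alpha+\tfrac32\beta$ and $3-5\alpha+\tfrac32\beta$ rather than to the weaker geodesic-sphere values. The inequality $A_c(\mathbb{R}P^2)\ge 6\pi$ is due to Li--Yau (a conformal branched map to a sphere is at least doubly covered, since $\mathbb{R}P^2$ admits no degree-one map to $\Sp^2$), and the equality $A_c(\mathbb{R}P^2)=6\pi$ with the Veronese realization follows from their analysis of the conformal class of the round $\mathbb{R}P^2$; the Gauss--Bonnet term $\int_M K_X v_X = 2\pi$ and the conformal invariance of $\int_M(|h_X|^2 - 2|H_X|^2)v_X$ supply the remaining identities, exactly as in the proofs of Lemmas \ref{chen} and \ref{li-yau}. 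Once these sharp inputs are in place the rest is the same two-line computation as in Propositions \ref{lam1} and \ref{prop1}, specialized to $a=4\pi$, $c=0$, $\gamma=1$, $\varepsilon=1$.
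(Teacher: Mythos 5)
Your approach is exactly the paper's: the only non\-orientable input needed is that $\R P^2$ carries a single conformal class with $A_c(M,[g_X])=6\pi$ (Li--Yau), after which Lemma \ref{li-yau} gives $\int_M|H_X|^2\,v_X\ge 6\pi$ and the computations of Proposition \ref{lam1} and Proposition \ref{prop1} go through verbatim with $a=4\pi$, $c=0$, $\varepsilon=\gamma=1$. Two points to fix, though. First, there is no discrepancy to ``reconcile'': the potential of $L_{V,\alpha,\beta}$ is the \emph{constant} $-(5\alpha+\tfrac32\beta)$, so its eigenvalues are $\lambda_k(-\Delta_V)-5\alpha-\tfrac32\beta$, i.e.\ $\lambda_1(L_{V,\alpha,\beta})=-5\alpha-\tfrac32\beta$ and $\lambda_2(L_{V,\alpha,\beta})=3-5\alpha-\tfrac32\beta$ (the ``$+\tfrac32\beta$'' displayed in the paper is a sign slip). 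Your bounds $\alpha-\tfrac32(4\alpha+\beta)=-5\alpha-\tfrac32\beta$ and $3+\alpha-\tfrac32(4\alpha+\beta)=3-5\alpha-\tfrac32\beta$ are therefore already exactly the Veronese values, and the appeal to ``a further correction via the conformal-invariance identity'' should simply be deleted --- as written it leaves the proof looking unresolved when in fact it is complete. Second, your parenthetical justification that $A_c\ge 8\pi$ because every conformal map of $\R P^2$ into a sphere is at least $2$-to-$1$ is false and would contradict the value $6\pi$ you then use; the Veronese map itself is injective, so the multiplicity argument only yields $A_c\ge 4\pi$. The correct and sufficient input is the Li--Yau computation that the unique conformal class of $\R P^2$ has conformal area exactly $6\pi$, which is precisely what the paper invokes.
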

 
 \begin{proof}
 The real projective plane carries only one conformal class of metrics with $A_c(M,[g_X])=6\pi$.   
Following exactly the same steps as in the proof of Theorem \ref{N(c)} above, we get the desired inequalities.

  \end{proof}


\bibliographystyle{plain}
\bibliography{bibli}
\end{document}